\theoremstyle{plain}
\newtheorem{theorem}{Theorem}[section]
\newtheorem{theorem2}{Theorem}[section]
\newtheorem{proposition}[theorem2]{Proposition}
\newtheorem{cor}[theorem2]{Corollary}
\newtheorem{lemma}[theorem2]{Lemma}
\theoremstyle{definition}
\newtheorem{definition}[theorem2]{Definition}
\theoremstyle{remark}
\numberwithin{equation}{section}
\newcommand{\Riem}{\mathrm{Rm}}
\newcommand{\vol}{\mathrm{Vol}}
\newcommand{\Ric}{\mathrm{Ric}}
\newcommand{\Hess}{\mathrm{Hess}}
\newcommand{\tr}{\mathrm{tr}}
\renewcommand{\div}{\mathrm{div}}
\begin{document}
\title{Compact Hermitian symmetric spaces, coadjoint orbits, and the dynamical stability of the Ricci flow}
\author{Stuart James Hall}
\address{School of Mathematics and Statistics, Herschel Building, Newcastle University, Newcastle-upon-Tyne, NE1 7RU} 
\email{stuart.hall@ncl.ac.uk}
\author{Thomas Murphy}
\address{Department of Mathematics, California State University Fullerton, 800 N. State College Bld., Fullerton, CA 92831, USA.}
\email{tmurphy@fullerton.edu}
\author{James Waldron}
\address{School of Mathematics and Statistics, Herschel Building, Newcastle University, Newcastle-upon-Tyne, NE1 7RU} 
\email{james.waldron@ncl.ac.uk}
\maketitle  \vspace{-10pt}

\begin{abstract}
Using a stability criterion due to Kr\"oncke, we show, providing ${n\neq 2k}$, the K\"ahler--Einstein metric on the Grassmannian $Gr_{k}(\mathbb{C}^{n})$ of complex $k$-planes in an $n$-dimensional complex vector space is dynamically unstable as a fixed point of the Ricci flow. This generalises the recent results of Kr\"oncke and Knopf--Sesum on the instability of the Fubini--Study metric on $\mathbb{CP}^{n}$ for $n>1$. The key to the proof is using the description of Grassmannians as certain coadjoint orbits of $SU(n)$. We are also able to prove that Kr\"oncke's method will not work on any of the other compact, irreducible, Hermitian symmetric spaces. 
\end{abstract}
\section{Introduction} \label{sec:1}
In 2013 Kr\"oncke proved the surprising result that the Fubini--Study K\"ahler--Einstein metric on $\mathbb{CP}^{n}$, $n>1$,  is unstable as a fixed point of the Ricci flow \cite{KlKr1}. More precisely, he showed that there are certain conformal (and hence non-K\"ahler) deformations of the Fubini--Study metric from which the Ricci flow never returns. This is in stark contrast to the behaviour of the K\"ahler--Ricci flow where Tian and Zhu \cite{TZ} have shown that K\"ahler--Einstein metrics are essentially global attractors within their K\"ahler class. In \cite{KS} Knopf and Sesum give an independent verification of Kr\"oncke's result.\\
\\
The behaviour of Ricci flow on manifolds admitting K\"ahler metrics is a topic of current interest (see for example \cite{HMPJM}, \cite{HMAGAG1}, \cite{IKS}, and \cite{Max}). What Kr\"oncke's result suggests is that behaviour of the Ricci flow near the space of K\"ahler metrics is more complicated than was initially believed. If a Fano manifold $M$ with Hodge number $h^{1,1}(M)>1$  admits a K\"ahler--Einstein metric then it can be destabilised by a harmonic perturbation within the K\"ahler cone. This method can be used to show many known examples of K\"ahler--Einstein metrics are unstable. However, as the complex dimension of the Fano manifold grows, there are numerous examples of K\"ahler--Einstein manifolds with $h^{1,1}(M)=1$.  One such class of K\"ahler--Einstein manifolds are the compact, irreducible, Hermitian symmetric spaces. These manifolds were completely classified by E. Cartan into six types; there are four infinite families and two exceptional spaces. Each of these spaces admits a K\"ahler--Einstein metric unique up to automorphisms of the complex structure; this metric is the symmetric metric on each manifold. We will henceforth implicitly assume all manifolds in this paper are equipped with their symmetric space (and K\"ahler--Einstein) metrics.\\
\\
The stability criterion employed by Kr\"oncke is very simple to state (c.f. Theorem \ref{KKstabthm}); if an Einstein metric with Einstein constant $\frac{1}{2\tau}>0$  admits an eigenfunction of the Laplacian, $f$ say, with eigenvalue $-\frac{1}{\tau}$ and the integral over the manifold of $f^{3}$ does not vanish,  then the metric is dynamically unstable. It is a classical result of Matsushima \cite{Mat} that, on a Fano K\"ahler--Einstein manifold, there is a bijection between Killing fields and the eigenspace corresponding to $-\frac{1}{\tau}$.  Hence K\"ahler--Einstein manifolds with large symmetry groups are ideal candidates on which to attempt to use Kr\"oncke's result to investigate stability. The first theorem we prove says that, when the symmetric space is not a Grassmannian of complex $k$-planes in an $n$-dimensional complex vector space (which we denote $Gr_{k}(\mathbb{C}^{n})$), the integral of $f^{3}$ will necessarily vanish.

\begin{theorem}\label{ThmCST}
Let $(M,g)$ be a compact, irreducible, Hermitian symmetric space which is not a Grassmannian $Gr_{k}(\mathbb{C}^{n})$ and let $g$ be the canonical K\"ahler--Einstein metric normalised to have Einstein constant $\frac{1}{2\tau}$. Then any ($-\dfrac{1}{\tau}$)-eigenfunction of the Laplacian, $f$, satisfies
\begin{equation*}
\int_{M}f^{3} \ dV_{g} = 0.
\end{equation*}
\end{theorem}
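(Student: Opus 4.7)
The plan is to realise $\int_M f^3\, dV_g$ as an $\mathrm{Ad}$-invariant cubic form on a simple Lie algebra, and then appeal to classical invariant theory. Let $G$ denote the identity component of the isometry group of $(M,g)$, which is a compact simple Lie group for every compact irreducible Hermitian symmetric space in the classification, and let $V_\tau$ be the $(-\tfrac{1}{\tau})$-eigenspace of $\Delta$ on $C^\infty(M)$. By Matsushima's theorem (as stated in the introduction), the map sending a Killing vector field $X$ to its normalised Killing potential $\mu_X$ is a $G$-equivariant linear isomorphism $\mu: \mathfrak{g} \xrightarrow{\sim} V_\tau$, where $G$ acts on $\mathfrak{g}$ by the adjoint representation and on $V_\tau$ by pullback of functions.

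Consider the symmetric trilinear form
\[
T(f_1,f_2,f_3) := \int_M f_1 f_2 f_3\, dV_g
\]
on $V_\tau$. Since $dV_g$ is $G$-invariant, so is $T$. Pulling back along $\mu$ yields a $G$-invariant symmetric trilinear form on $\mathfrak{g}$, equivalently---since $G$ is connected---an $\mathrm{Ad}(G)$-invariant homogeneous polynomial of degree $3$ on $\mathfrak{g}$. Proving $\int_M f^3\, dV_g = 0$ for every $f \in V_\tau$ is thus equivalent to showing this invariant cubic vanishes identically.

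By Chevalley's restriction theorem, the algebra of $\mathrm{Ad}$-invariant polynomials on a compact simple Lie algebra $\mathfrak{g}$ is freely generated by $\mathrm{rank}(\mathfrak{g})$ homogeneous polynomials, whose degrees are the classical fundamental degrees: $2,3,\ldots,n+1$ for $A_n$; $2,4,\ldots,2n$ for $B_n$ and $C_n$; $\{2,4,\ldots,2n-2,n\}$ for $D_n$; and explicit finite lists for the exceptional types, none of which contains $3$. Hence a nonzero invariant cubic exists only in type $A_n$ with $n\ge 2$. Inspecting the classification of compact, irreducible, Hermitian symmetric spaces, the only ones with a type $A$ isometry algebra are the Grassmannians $Gr_k(\CC^n)$. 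The remaining spaces---$SO(2n)/U(n)$ (type $D_n$, $n \ge 4$), $Sp(n)/U(n)$ (type $C_n$), the complex quadric (type $B$ or $D$), and the exceptional spaces $E_6/(\mathrm{Spin}(10)\cdot U(1))$ and $E_7/(E_6 \cdot U(1))$---admit no $\mathrm{Ad}$-invariant cubic, so $T$ vanishes and the theorem follows.

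The main step requiring care is establishing the $G$-equivariance of $\mu$, which is standard but depends on tracking the sign and normalisation conventions in Matsushima's correspondence. A mild subtlety concerns the low-rank coincidences $D_3 \cong A_3$ (giving $SO(6)/U(3) \cong \CC P^3$) and $Q_4 \cong Gr_2(\CC^4)$: these are precisely the exceptional cases where the naive degree count might include $3$, but they coincide with Grassmannians already excluded from the statement, so no separate argument is required.
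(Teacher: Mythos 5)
Your proposal is correct and follows essentially the same route as the paper: realise the cubic form $f\mapsto\int_M f^3\,dV_g$ as an $\mathrm{Ad}(G)$-invariant degree-$3$ polynomial on $\mathfrak{g}$ via the (equivariant) Matsushima correspondence, then invoke the Chevalley restriction theorem together with the Chevalley--Shephard--Todd degrees to conclude that such an invariant exists only in type $A$, with the low-rank coincidences $SO(6)/U(3)\cong\mathbb{CP}^3$ and $Q_4\cong Gr_2(\mathbb{C}^4)$ absorbed into the excluded Grassmannian cases exactly as in the paper.
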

The proof of this theorem uses the Chevalley--Shepherd--Todd theorem which classifies the degrees in which the generators of certain polynomial algebras can exist when the polynomial is required to be invariant under the action of a compact simple Lie group $G$. Applied to our stability problem, this classification only permits a generator in the required degree when $G=SU(n)$ and so the only class of space where the integral can be non-zero is the complex Grassmannians.\\ 
\\
We are able to compute the stability integral for $Gr_{k}(\mathbb{C}^{n})$ and show that `generic' Grassmannians are unstable. This result generalises the $\mathbb{CP}^{n}$ calculation of Kr\"oncke and Knopf--Sesum.
\begin{theorem}\label{ThmGr}
If $k,n \in \mathbb{N}$ with $1\leq k<n$ and $n\neq 2k$, then  $Gr_{k}(\mathbb{C}^{n})$ is dynamically unstable as a fixed point of the Ricci flow. 
\end{theorem}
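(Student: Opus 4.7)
The plan is to apply Kr\"oncke's criterion (Theorem \ref{KKstabthm}) by producing a $(-\tfrac{1}{\tau})$-eigenfunction $f_X$ with $\int_M f_X^3\,dV_g\neq 0$. First I would realize $Gr_k(\CC^n)\cong SU(n)/S(U(k)\times U(n-k))$ as the coadjoint orbit $\mathcal{O}\subset\mathfrak{su}(n)$ through
\[
\xi_0 := i\,\diag\bigl(\underbrace{n-k,\ldots,n-k}_{k},\,\underbrace{-k,\ldots,-k}_{n-k}\bigr),
\]
carrying the symmetric K\"ahler--Einstein metric induced from (a multiple of) the Killing form pairing $\langle\cdot,\cdot\rangle$. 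By Matsushima's theorem, the $(-\tfrac{1}{\tau})$-eigenspace of the Laplacian is parametrized by $\mathfrak{su}(n)$ via $X\mapsto f_X$, with $f_X(\xi) = c_0\langle\xi,X\rangle$ for a suitable normalizing constant.

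Next I would analyze $I(X) := \int_M f_X^3\,dV_g$ as a function of $X\in\mathfrak{su}(n)$. Since $dV_g$ and $\langle\cdot,\cdot\rangle$ are $SU(n)$-invariant and $f_{\mathrm{Ad}_u X} = f_X\circ\mathrm{Ad}_{u^{-1}}$ on $\mathcal{O}$, a change of variables gives $I(\mathrm{Ad}_u X)=I(X)$, so $I$ is an $\mathrm{Ad}$-invariant cubic polynomial on $\mathfrak{su}(n)$. For $n\geq 3$ (forced by $1\le k<n$ and $n\neq 2k$), such invariants form a one-dimensional space generated by $\mathrm{tr}(X^3)$, so
\[
I(X) = c(n,k)\,\mathrm{tr}(X^3)
\]
for some scalar $c(n,k)$; instability therefore reduces to showing $c(n,k)\neq 0$.

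To compute $c(n,k)$ I would evaluate at $X=\xi_0$, for which $\mathrm{tr}(\xi_0^3) = -ikn(n-k)(n-2k)$ is nonzero precisely when $n\neq 2k$. Parametrizing $\xi = inP - ikI$ for a rank-$k$ Hermitian projection $P$ on $\CC^n$, a short calculation gives $f_{\xi_0}(\xi) = c_0 n(nT - k^2)$ with $T := \mathrm{tr}(PP_0)$ for the reference projection $P_0$, so
\[
I(\xi_0) = c_0^3\, n^3 \int_M (nT - k^2)^3\, dV_g
\]
becomes a cubic polynomial in the first three moments of $T$ on $Gr_k(\CC^n)$, tractable either by Weingarten calculus on $U(n)$ or by Duistermaat--Heckman localization at the $\binom{n}{k}$ coordinate-plane fixed points of the maximal torus. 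The vanishing of $c(n,k)$ at $n=2k$ is \emph{a priori} guaranteed by the isometry $V\mapsto V^\perp$ of $Gr_k(\CC^{2k})$, which implements $\xi\mapsto -\xi$ on $\mathcal{O}$, makes $f_X^3$ odd and forces $I\equiv 0$; the main obstacle is thus to verify through the moment computation that this $(n-2k)$ factor is the only source of vanishing, whence $c(n,k)\neq 0$ for $n\neq 2k$ and Kr\"oncke's criterion delivers the instability.
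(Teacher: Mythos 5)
Your setup coincides with the paper's: realise $Gr_k(\CC^n)$ as the adjoint orbit through $\xi_0$, use Matsushima's theorem to identify the $(-1/\tau)$-eigenspace with $\mathfrak{su}(n)$ via $X\mapsto f_X$, and observe that $I(X)=\int_M f_X^3\,dV_g$ is an $\mathrm{Ad}$-invariant cubic. Your further reduction $I(X)=c(n,k)\,\mathrm{tr}(X^3)$ (strictly one should write a real multiple of $i\,\mathrm{tr}(X^3)$, since $\mathrm{tr}(X^3)$ is purely imaginary on $\mathfrak{su}(n)$, but that is cosmetic) is correct for $n\ge 3$ and is a clean repackaging of the paper's Lemma \ref{homflem} combined with the Chevalley restriction theorem, which the paper deploys in the proof of Theorem \ref{ThmCST}. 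The duality argument for vanishing at $n=2k$ is likewise the one the paper records after the statement of Theorem \ref{ThmGr}.

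The gap is that you never establish $c(n,k)\neq 0$, and that is the entire content of the theorem. Knowing $I(X)=c(n,k)\,\mathrm{tr}(X^3)$ and that $\mathrm{tr}(\xi_0^3)=-i\,nk(n-k)(n-2k)\neq 0$ for $n\neq 2k$ tells you nothing until you actually show $I(\xi_0)\neq 0$ (or $I(D)\neq 0$ for some other test element): \emph{a priori} $c(n,k)$ could vanish identically, and the factor $(n-2k)$ visible in $\mathrm{tr}(\xi_0^3)$ cannot by itself certify that it is ``the only source of vanishing.'' Your proposed routes (Weingarten moments of $T=\mathrm{tr}(PP_0)$, or Duistermaat--Heckman localisation) are viable in principle --- though note that localisation at isolated fixed points requires a \emph{regular} generator $D$ rather than $\xi_0$ itself, whose circle action has positive-dimensional fixed loci --- but neither is carried out, and you explicitly flag this as the main obstacle. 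That is precisely where the paper's work lies: Sections \ref{sec:4}--\ref{sec:5} convert the Duistermaat--Heckman sum over the $\binom{n}{k}$ torus-fixed points into $\det\mathcal{M}(t)$, extract the coefficient of $t^{k(n-k)+3}$ as a combination of the Schur polynomials $S_{(3,0,\dots,0)}$, $S_{(2,1,0,\dots,0)}$, $S_{(1,1,1,0,\dots,0)}$ with coefficients computed via the hook length formula, and verify that this combination is a multiple of $S_{(1,0,\dots,0)}$ (hence vanishes on the trace-zero hyperplane) precisely when $k(n-k)(2k-n)=0$. Without carrying out some equivalent of that computation, the proof is incomplete.
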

The integral of $f^{3}$ vanishes for the spaces $Gr_{k}(\mathbb{C}^{2k})$; the proof of Theorem \ref{ThmGr} shows this directly but it may also  be seen via the following argument. The duality map ${\Psi:Gr_{k}(\mathbb{C}^{n})\rightarrow Gr_{n-k}(\mathbb{C}^{n})}$  which maps a subspace to its orthogonal  complement is an isometry.  In the case when $n=2k$,  $\Psi$ is also an involution. One can show (see \cite{GG}) that  in this situation any ($-\dfrac{1}{\tau}$)-eigenfunction of the Laplacian $f$  satisfies the equation $\Psi^{\ast}f = -f$. This directly implies that the integral of any odd power of $f$ will vanish.\\  
\\
Hence we cannot conclude  anything about the stability of the spaces $Gr_{k}(\mathbb{C}^{2k})$ apart from the case when $k=1$ as then ${Gr_{1}(\mathbb{C}^{2})\cong \mathbb{CP}^{1} \cong \mathbb{S}^{2}}$. In this case, the K\"ahler--Einstein metric is the round metric and this is known to be dynamically stable by a result of Hamilton \cite{Ham} (Chow later proved that the round metric on $\mathbb{S}^{2}$ is a global attractor for the normalised Ricci flow starting at any initial metric \cite{Chow}).\\ 
\\
The methods used in \cite{KS} and \cite{KlKr1} to show a destabilising eigenfunction exists on $\mathbb{CP}^{n}$ use the generalised Hopf fibration to lift the problem to finding certain $U(1)$-invariant functions on the sphere $\mathbb{S}^{2n+1}\subset \mathbb{C}^{n+1}$. This paper takes a totally different approach by viewing the Grassmannians as adjoint orbits of $SU(n)$ and using techniques coming from symplectic geometry (such as the Duistermaat--Heckman formula) to construct eigenfunctions and make calculations of the relevant integrals. \\
\\
\textit{Acknowledgements:} We would like to thank the referees for their careful reading of the paper and their helpful suggestions for improvements.

\section{Background}\label{sec:2}
\subsection{Stability}
Einstein metrics $g$ satisfying $\Ric(g)=\frac{1}{2\tau} g$ for $\tau \in \mathbb{R}$, evolve via homothetic scaling under the Ricci flow
\begin{equation*}
\dfrac{\partial g}{\partial t} =-2\Ric(g).
\end{equation*}	
 It is therefore useful to  view Einstein metrics as fixed points of the Ricci flow up to a normalisation of the volume of the metric by homothetic scaling. A natural question is whether a given Einstein metric is stable as a fixed point in the sense that the Ricci flow starting at a small perturbation of the metric will return to the original Einstein metric.  Perelman \cite{Per1} introduced a functional $\nu(g)$ which is stationary at shrinking gradient Ricci solitons (every Einstein metric with $\tau>0$ is such a soliton) and which is otherwise strictly increasing along the Ricci flow. This allows the stability of an Einstein metric to be investigated by calculation of the second variation of the $\nu$ functional along potentially destabilising directions. If the entropy increases along a particular direction then the corresponding perturbation of the Einstein metric will never return under the flow. This process was first carried out for Einstein metrics by Cao, Hamilton, and Ilmanen \cite{CHI} and later generalised by Cao and Zhu \cite{CZ}.

\begin{theorem2}[Cao--Hamilton--Ilmanen \cite{CHI}]\label{CHIstab}
	Let $(M,g)$ be an Einstein metric with Einstein constant $\frac{1}{2\tau}>0$. Let $h\in s^{2}(T^{\ast}M)$. Then
	$$
	\frac{d^{2}}{ds^{2}}\nu(g+sh) |_{s=0} = \frac{\tau}{\vol(M,g)}\int_{M}\langle N(h),h\rangle dV_{g},	
	$$
	where
	\begin{equation}\label{Stabop}
	N(h) = \frac{1}{2}\Delta h+\Riem(h,\cdot)+\div^{\ast}\div (h)+\frac{1}{2}\nabla^{2}v_{h} -\frac{g}{2n\tau\vol(M,g)}\int_{M}\tr(h)dV_{g},
	\end{equation}
	and $v_{h}$ is the unique solution to
	$$ \Delta v_{h}+\frac{v_{h}}{2\tau}=\div(\div (h)). $$	
\end{theorem2}
The diffeomorphism and scale invariance of $\nu(g)$ means that to check linear stability one only needs to consider perturbations  $h\in s^{2}(T^{\ast}M)$ satisfying 
$$\div(h) = 0 \textrm{ and } \langle h,g\rangle_{L^{2}} = \int_{M}\tr(h)dV_{g} = 0.$$
In this case, the stability operator $N$ in Equation (\ref{Stabop}) reduces to 
$$N(h)  = \frac{1}{2}\Delta h +\Riem(h,\cdot) = \frac{1}{2}(\Delta_{L}+\frac{1}{\tau})h,$$
where  $\Delta_{L}$ is the Lichnerowicz Laplacian 
$$\Delta_{L}h := \Delta h +2\Riem(h,\cdot)-\Ric\cdot h-h\cdot \Ric.$$
We thus have the following definitions:
\begin{definition}[Linear stability of Einstein metrics]
Let $(M,g)$ be a compact Einstein manifold satisfying $\Ric(g) =\dfrac{1}{2\tau} g$ and let $-\kappa$ be the largest eigenvalue of the Lichnerowicz Laplacian restricted to the space of divergence-free, $g$-orthogonal tensors.
\begin{enumerate}
	\item If $\kappa > \dfrac{1}{\tau}$, $g$ is called \textit{linearly stable}. \vspace{5pt}
	\item If $\kappa =\dfrac{1}{\tau}$, $g$ is called \textit{neutrally linearly stable}. \vspace{5pt}
	\item If $\kappa < \dfrac{1}{\tau}$, $g$ is called \textit{linearly unstable}.   
\end{enumerate}
\end{definition}

\begin{definition}[Dynamical stability of Einstein metrics]
Let $(M^{n},g_{E})$ be a compact Einstein manifold. The metric $g_{E}$ is said to be \textit{dynamically stable} for the Ricci flow if for any $m\geq 3$ and any $C^{m}$-neighbourhood $U$ of $g_{E}$ in the space of sections $\Gamma(s^{2}(T^{\ast}M))$, there exists a $C^{m+2}$ neighbourhood of $g_{E}$, $V \subset U$, such that: 
\begin{enumerate}
	\item for any $g_{0}\in V$, the volume normalised Ricci flow 
	$$ \dfrac{\partial g}{\partial t} =-2\Ric(g)+\frac{2}{n}\left(\int_{M}\textrm{scal}(g)dV_{g}\right)g,$$
	with $g(0) = g_{0}$ exists for all time,
	\item the metrics $g(t)$ converge modulo diffeomorphism to an Einstein metric in $U$.
\end{enumerate}
	We call the metric $g_{E}$ {\it dynamically unstable} if there exists a non-trivial normalized Ricci flow defined on $(-\infty,0]$ which converges modulo diffeomorphism to $g_{E}$ as $t\rightarrow -\infty$. 
\end{definition}

The relationship between linear stability and various notions of dynamical stability was pioneered by Sesum \cite{Sesum}. In particular, under the assumption that all infinitesimal Einstein deformations are integrable, Sesum proved that linear stability implies dynamical stability.  In \cite{KKCVP}, Kr\"oncke built on the work of Haslhofer and M\"uller \cite{HasMul} and showed that an Einstein metric with positive Einstein constant is dynamically stable if and only if it is a local maximum of the $\nu$ functional. This characterisation does not require an infinitesimal perturbation to satisfy any integrability assumptions and forms the basis of Kr\"oncke's stability criterion in Theorem \ref{KKstabthm}.  \\
\\
We remark that the definition of dynamical instability requires the existence of a non-trivial ancient flow emerging from the Einstein metric.  This is much stronger than saying that a metric is unstable if it is not dynamically stable. Hence Kr\"oncke's stability theorem in \cite{KKCVP} combined with Theorem \ref{ThmGr} yields the existence of such an ancient flow emerging from the K\"ahler--Einstein metric on the Grassmannians $Gr_{k}(\mathbb{C}^{n})$ (except for the $n=2k$ case).\\
\\
In general, it is very difficult to analyse the spectrum of the Lichnerowicz Laplacian for an arbitrary Einstein metric.  If the metric has some extra structure then more can be said.  In the case the Einstein metric is K\"ahler--Einstein then there is the following topological condition (originally stated in \cite{CHI} and proved for the more general class of K\"ahler--Ricci solitons in \cite{HMPAMS}) 
\begin{theorem2}[Cao--Hamilton--Ilmanen]
Let $(M,J,g)$ be a K\"ahler--Einstein metric. If the Hodge number $h^{1,1}(M)>1$ then $g$ is linearly unstable.
\end{theorem2}
This proposition can be seen as generalising the fact that any product of Einstein metrics with fixed Einstein constant $\frac{1}{2\tau}$ is unstable under the Ricci flow. The product of any two K\"ahler--Einstein metrics always has $h^{1,1}(M)>1$.\\
\\
In \cite{CH}, Cao and He made a complete study of the stability of the simply-connected, compact, irreducible, symmetric spaces. The spaces where the metric is K\"ahler--Einstein can be written in the form $M=G/H$ where $G$ is a connected compact simple Lie group and $H$ is the isotropy subgroup.  We note that the identity component of the isometry group $\textrm{Iso}_{0}=G$ and so the Lie algebra of Killing fields $\mathfrak{k} = Lie(\textrm{Iso}_{0})$  is isomorphic to the Lie algebra $\mathfrak{g}$. All the manifolds in the following theorem have $h^{1,1}(M)=1$.
\begin{theorem2}[Cao--He, c.f. Theorem 4.3 in \cite{CH}] \label{CHT}
The linear stability of the irreducible compact Hermitian symmetric spaces $M ={G}/{H}$ is as follows:
	\begin{enumerate}
		\item $M$ is linearly unstable if:
		\begin{itemize}
			\item $M$ is the space of compatible complex structures on $\mathbb{H}^{n}$,  $M = {Sp(n)}/{U(n)},$			for $n>1$.
		\end{itemize}
		\item $M$ is neutrally linearly stable if:
	\begin{itemize}
		\item $M$ is a complex Grassmannian ${Gr_{k}(\mathbb{C}^{n})}= {SU(n)}/{S(U(k)\times U(n-k))},$ 
		where $n>2$ and ${0<k<n}$,\vspace{3pt}
		\item $M$ is a complex hyperquadric  $Q_{n} = {SO(n + 2)}/{(SO(n) \times SO(2))},$ 
		where $n \geq 4$,
		\item $M$ is a space of orthogonal almost complex structures on $\mathbb{R}^{2n}$,  
		$M = {SO(2n)}/{U(n)},$ 
		for $n>2$,
		\item $M$ is one of the exceptional spaces, 
		${M= {E_{6}}/{(SO(10)\times SO(2))}},$ or \newline
		${M = {E_{7}}/{(E_{6}\times SO(2))}}.$ 
	\end{itemize}	 
\item If $M$ is the sphere $\mathbb{S}^{2} \cong Gr_{1}(\mathbb{C}^{2}) \cong \dfrac{SO(4)}{U(2)}$ then $M$ is dynamically stable and so linearly stable.
\end{enumerate}
\end{theorem2} 
Missing from this list (as it is not irreducible) is the hyperquadric $${Q_{2} = \dfrac{SO(4)}{SO(2)\times SO(2)}  \cong \mathbb{CP}^{1}\times\mathbb{CP}^{1}}.$$ It is unstable as it is a product. The hyperquadric $Q_{3}\cong \dfrac{Sp(2)}{U(2)}$ has $h^{1,1}=1$ but is nevertheless linearly unstable by a result of Gasqui and Goldschmidt \cite{GG}.	\\ 
\\
What Theorem \ref{CHT} shows is that most of the Hermitian symmetric spaces are neutrally linearly stable.  In particular, the complex projective spaces $\mathbb{CP}^{n} = Gr_{1}(\mathbb{C}^{n+1})$ with $n>1$ are all neutrally linearly stable.\\
\\
On any Einstein manifold $(M,g)$ with Einstein constant $\frac{1}{2\tau}$, if there is an eigenfunction $f$ satisfying $\Delta f=-\frac{1}{\tau}f$ then we define the tensor
$$h_{f} := (\Delta f)g-\Hess(f)+\frac{f}{2\tau}g.$$
It can be shown (\cite{CHI}, \cite{CH},  \cite{HMAGAG1}) that $h_{f}$ is divergence free, $L^{2}$-orthogonal to $g$ and satisfies
$$ \Delta_{L}h_{f} = -\frac{1}{\tau}h_{f}.$$
In 2013 Kr\"oncke proved the following stability criterion by computing the third variation of the $\nu$ functional.
\begin{theorem2}[Kr\"oncke, Theorem 1.7  in \cite{KlKr1}] \label{KKstabthm}
Let $(M,g)$ be an Einstein metric with Einstein constant $\frac{1}{2\tau}$ and let $f$ be an eigenfunction of the Laplacian with eigenvalue $-\frac{1}{\tau}$. If the integral
\begin{equation}\label{KKstab}
\int_{M}f^{3}dV_{g} \neq 0,
\end{equation}
then $g$ is dynamically unstable as a fixed point of the Ricci flow and is destabilised by the tensor $h_{f}$.
\end{theorem2} 
Kr\"oncke then constructed a eigenfunction satisfying the condition (\ref{KKstab}) for the spaces $\mathbb{CP}^{n}$ with $n>1$ and proved:
\begin{cor}\label{C1}
The Fubini--Study metrics on $\mathbb{CP}^{n}$, $n>1$ are dynamically unstable as fixed points of the Ricci flow.
\end{cor}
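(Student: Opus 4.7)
The plan is to verify the hypothesis of Kr\"oncke's criterion (Theorem \ref{KKstabthm}) by explicitly constructing a $(-1/\tau)$-eigenfunction on $\mathbb{CP}^{n}$ whose cube has non-vanishing integral. Since the Fubini--Study metric is K\"ahler--Einstein, the Matsushima theorem recalled just before Theorem \ref{ThmCST} identifies this eigenspace with the space of Killing potentials for the $SU(n+1)$-action. Concretely, for each traceless Hermitian $A\in i\mathfrak{su}(n+1)$, the function \[f_{A}([z]):=\frac{z^{*}Az}{\abs{z}^{2}}\] is well-defined on $\mathbb{CP}^{n}$, and a short coordinate computation (or the standard moment-map picture for $\mathbb{CP}^{n}$ as a coadjoint orbit) shows $\Lap f_{A}=-\frac{1}{\tau}f_{A}$.

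Next, observe that $I\colon A\mapsto\int_{\mathbb{CP}^{n}}f_{A}^{3}\,dV_{g}$ is a homogeneous cubic polynomial on $i\mathfrak{su}(n+1)$ which is invariant under the adjoint action of $SU(n+1)$, since $f_{\mathrm{Ad}(g)A}=f_{A}\circ g^{-1}$ and $dV_{g}$ is $SU(n+1)$-invariant. By Chevalley's description of the adjoint invariants, this ring is freely generated by $\tr(A^{k})$ for $k=2,3,\ldots,n+1$, so for $n\geq 2$ the only cubic invariant up to scalar is $\tr(A^{3})$, forcing $I(A)=c_{n}\,\tr(A^{3})$ for some constant $c_{n}\in\RR$.

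The heart of the argument is then to prove $c_{n}\neq 0$. My plan is to evaluate $I$ on the test matrix $A_{0}=\diag(n,-1,\ldots,-1)$: its $U(n)$-stabiliser reduces $I(A_{0})$ to a beta-type integral over $[0,1]$, obtained by pushing the Fubini--Study volume forward along $[z]\mapsto\abs{z_{0}}^{2}/\abs{z}^{2}$ (yielding the familiar Duistermaat--Heckman measure $n(1-t)^{n-1}\,dt$). An elementary expansion then exhibits the result as a non-zero rational multiple of $\tr(A_{0}^{3})=n^{3}-n$.

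I expect this final non-vanishing check, rather than the invariant-theoretic reduction, to be the main technical step; once it is done, dynamical instability of $\mathbb{CP}^{n}$ for every $n>1$ follows immediately from Theorem \ref{KKstabthm}. The exclusion of $n=1$ is automatic since $\tr(A^{3})$ vanishes identically on traceless Hermitian $2\times 2$ matrices, consistent with the known stability of the round $\mathbb{S}^{2}\cong\mathbb{CP}^{1}$.
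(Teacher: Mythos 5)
Your proposal is correct, and it reaches the corollary by a genuinely different route from the one the paper takes. In the paper this statement is quoted as Kr\"oncke's theorem (proved in \cite{KlKr1}, and independently in \cite{KS}, by lifting along the Hopf fibration to $U(1)$-invariant quadratics on $\mathbb{S}^{2n+1}$), and it is also recovered as the $k=1$ case of Theorem \ref{ThmGr}, whose proof runs through the Duistermaat--Heckman localisation formula over the $^{n}C_{k}$ Weyl-orbit fixed points, the determinant $\det(\mathcal{M}(t))$, and the hook-length/Schur-polynomial bookkeeping of Lemmas \ref{AL1} and \ref{AL2}. You share the paper's two structural ingredients --- the moment-map eigenfunctions $f_{A}$ (your $z^{*}Az/\abs{z}^{2}$ is exactly the paper's $f_{\eta}$ for $\mathbb{CP}^{n}=\mathcal{O}_{\xi}\subset\mathfrak{su}(n+1)$, after using $\tr A=0$) and the invariant-theoretic observation that $A\mapsto\int f_{A}^{3}$ is an $\mathrm{Ad}$-invariant cubic, hence a multiple of $\tr(A^{3})$ (the paper's Lemma \ref{homflem} plus the Chevalley restriction argument of Section \ref{sec:3}) --- but you then shortcut the entire combinatorial Section \ref{sec:4} by evaluating the single unknown constant on the non-regular element $A_{0}=\diag(n,-1,\dots,-1)$, whose large stabiliser collapses the integral to $\int_{0}^{1}\bigl((n+1)t-1\bigr)^{3}\,n(1-t)^{n-1}\,dt$. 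That integral does come out to $2n(n-1)/\bigl((n+2)(n+3)\bigr)$ times the volume, which is non-zero precisely for $n\geq 2$ and proportional to $\tr(A_{0}^{3})=n(n-1)(n+1)$ as required, so the plan closes. Two small points worth making explicit: the non-regularity of $A_{0}$ is harmless because $I$ is a polynomial determined by its values anywhere $\tr(A^{3})\neq 0$ (whereas the paper's Lemma \ref{Bottlem} and Proposition \ref{Proptorstab} need regular elements for the localisation formula, not for this); and you should record that $\int f_{A}\,dV=0$ (a linear invariant, hence zero) so that $f_{A}$ genuinely lies in $E_{-1/\tau}$ before applying Theorem \ref{KKstabthm}. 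What your approach buys is a short, elementary proof for $\mathbb{CP}^{n}$; what the paper's heavier machinery buys is the extension to all $Gr_{k}(\mathbb{C}^{n})$ with $n\neq 2k$, where no single test element reduces the computation to a one-variable integral so cleanly.
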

This result was somewhat unexpected as a long-standing conjecture in the field had included $\mathbb{CP}^{2}$ on the list of stable, four-dimensional geometries for the Ricci flow. Theorem \ref{ThmGr} can be seen as a generalisation of the $\mathbb{CP}^{n}$ results of Kr\"oncke and Knopf--Sesum; however, as mentioned in the introduction, our construction of eigenfunctions and method of evaluating the integral is totally different from the methods used in \cite{KS} and \cite{KlKr1}.\\ 
\\
Given Theorem \ref{ThmGr}, it seems likely that the other compact irreducible Hermitian symmetric spaces are also unstable but that they represent metrics with high degrees of degeneracy as critical points of the $\nu$ functional. Further evidence for their instability might come from the behaviour of other non-Hermitian compact symmetric spaces. The first author \cite{Hall19} has investigated the stability of the canonical metric on the compact simple Lie group $G_{2}$  which is also neutrally linearly stable with the neutral directions coming from conformal perturbations corresponding to eigenfunctions of the Laplacian. In this case, Kr\"oncke's stability integral is non-zero for a certain eigenfunction and so $G_{2}$ is unstable.       

\subsection{Geometry of coadjoint orbits}
In this section $G$ is a compact, semisimple Lie group. Henceforth $g$ will denote an element of $G$ (not a Riemannian metric as it has done previously). As $G$ is semisimple, the Killing form $\langle\cdot,\cdot \rangle$ is non-degenerate and so the adjoint representation of $G$ is orthogonal. We can also use the Killing form (or any $\mathrm{Ad}_{G}$ -invariant inner product) to identify $\mathfrak{g}$ and $\mathfrak{g}^{\ast}$. The coadjoint action of $G$ on $\mathfrak{g}^{\ast}$ is defined by
$$\mathrm{Ad}_{g}^{\ast}(\xi)(X) := \xi(\mathrm{Ad}_{g^{-1}}(X))$$
for $g\in G$,  $\xi \in \mathfrak{g}^{\ast}$ and $X \in \mathfrak{g}$.  If ${\xi(\cdot) = \langle\cdot,X\rangle}$ then ${\mathrm{Ad}^{\ast}_{g}(\xi)(\cdot) = \langle \cdot, \mathrm{Ad}_{g}(X)\rangle}$ and we have a straightforward identification of coadjoint and adjoint orbits via the Killing form. \\
\\
For $\xi \in \mathfrak{g}$ we consider the orbit $\mathcal{O}_{\xi}$ of $\xi$ under the adjoint action of $G$. Denote by $H$ stabiliser of $\xi$ and let $\mathfrak{h}\subset \mathfrak{g}$ be its Lie algebra. Then $\mathfrak{g}  =\mathfrak{h} \oplus \mathfrak{m}$ where $\mathfrak{m}$ can be identified with the tangent space to $\mathcal{O}_{\xi}$ at $\xi$. The subalgebra $\mathfrak{h}$ is the kernel of the map ${\mathrm{ad}({\xi}):\mathfrak{g} \rightarrow \mathfrak{g}}$ and thus $\mathfrak{m}$ is the image.\\
\\
We let $T$ be a maximal torus of $G$ and take $\mathfrak{t} = Lie(T)$ to be its Lie algebra. The Weyl group ${W=N_{G}(T)/T}$ where $N_{G}(T)$ is the normaliser of $T$ in $G$. A classical theorem (see for example Bott \cite{Bott}) yields:
\begin{enumerate}
	\item $\mathcal{O}_{\xi}\cap \mathfrak{t} \neq \emptyset$,
	\item $\mathcal{O}_{\xi}\cap \mathfrak{t}$ is a $W$-orbit.
\end{enumerate}
This means, without loss of generality, we can take the element representing the orbit $\xi \in \mathfrak{t}$.\\
\\
The orbits have the structure of a complex manifold. Decomposing the complexified Lie algebra $\mathfrak{g} \otimes \mathbb{C}$ we get
$$ \mathfrak{g} \otimes \mathbb{C} =  \mathfrak{t}_{\mathbb{C}} \oplus\left( \bigoplus_{\alpha: \ \langle \alpha, \xi\rangle=0} R_{\alpha} \right)\oplus A\oplus \bar{A}$$
where $\alpha \in \mathfrak{t}_{\mathbb{C}}$ are the roots of $G$, $R_{\alpha}$ is the root space of $\alpha$, and $A$ is the span of the root spaces satisfying
$$[\xi, r_{\alpha}] =i\langle \alpha, \xi\rangle r_{\alpha},  \textrm{ with } \langle \alpha, \xi\rangle >0 \textrm{ for all } r_{\alpha}\in R_{\alpha}.$$
One can identify $\mathfrak{m}_{\mathbb{C}}\cong A\oplus \bar{A}$ and show that  $A$ and $\mathfrak{t}_{\mathbb{C}} \oplus A$ are Lie subalgebras of $ \mathfrak{g} \otimes \mathbb{C}$. By defining $\mathfrak{m}^{(1,0)}=A$ we get a $G$-invariant complex structure on $\mathcal{O}_{\xi}$.\\
\\
The Kirillov--Kostant--Souriau symplectic form is defined as
$$\omega_{\xi}(x,y) = -\langle\xi,[x,y]\rangle,$$
for $x,y \in \mathfrak{m}$. This is extended over  $\mathcal{O}_{\xi}$ using the adjoint action. This form is compatible with the complex structure and gives the orbit the structure of a K\"ahler manifold. In the case when the center of $H$ has dimension 1 the induced metric is K\"ahler--Einstein (c.f.\cite{Bes} Proposition 8.85). This always holds for all  manifolds considered in this paper; in fact the K\"ahler--Einstein metric is precisely the Hermitian symmetric space metric.  
\subsection{Properties of the eigenfunctions}
We will now show how to construct eigenfunctions for the Laplacian of the K\"ahler--Einstein metric on $\mathcal{O}_{\xi}$. We begin by defining functions ${f_{\eta} \in C^{\infty}(\mathcal{O}_{\xi})}$ by
\begin{equation} \label{ef_def}
f_{\eta}(Z): =  \langle Z,\eta \rangle,
\end{equation}
where $Z\in \mathcal{O}_{\xi}$ and $\langle \cdot, \cdot \rangle$ is the inner product on $\mathfrak{g}$ coming from the Killing form. These functions satisfy some important properties.
\begin{lemma} \label{f_mv_lem}
Suppose that $\eta, \tilde{\eta} \in \mathfrak{g}$ are in the same $G$-orbit. Then the functions $f_{\eta},f_{\tilde{\eta}} \in C^{\infty}(\mathcal{O}_{\xi})$ defined by Equation (\ref{ef_def}) satisfy 
$$
\int_{\mathcal{O}_{\xi}}f_{\eta}^{k} \ \omega^{n}  = \int_{\mathcal{O}_{\xi}}f_{\tilde{\eta}}^{k} \ \omega^{n},  
$$ 
where $k\in \mathbb{N}$, $n$ is the complex dimension of $\mathcal{O}_{\xi}$, $\omega$ is the Kirillov--Kostant--Souriau symplectic form, and where $\omega^{n}$ the volume form of the associated K\"ahler--Einstein metric. Furthermore, in the case that $k=1$ we have
$$\int_{\mathcal{O}_{\xi}} f_{\eta} \ \omega^{n} = 0,$$
for all $\eta \in \mathfrak{g}$.
\end{lemma}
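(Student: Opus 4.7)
The plan is to reduce both claims to the $G$-equivariance built into the construction: the Kirillov--Kostant--Souriau form $\omega$ is $\mathrm{Ad}$-invariant, the Killing form is $\mathrm{Ad}$-invariant, and the functions $f_\eta$ are linear in $\eta$.

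For the first identity, choose $g \in G$ with $\tilde\eta = \mathrm{Ad}_g \eta$ and let $\phi_g : \mathcal{O}_\xi \to \mathcal{O}_\xi$ denote the diffeomorphism $Z \mapsto \mathrm{Ad}_g Z$. I would first verify that $\phi_g^* \omega = \omega$: at a point $Z = \mathrm{Ad}_h \xi$ the tangent space is identified with $\mathrm{Ad}_h \mathfrak{m}$, the differential of $\phi_g$ is just $\mathrm{Ad}_g$, and using $\mathrm{Ad}_g[x,y] = [\mathrm{Ad}_g x, \mathrm{Ad}_g y]$ together with the $\mathrm{Ad}$-invariance of $\langle\cdot,\cdot\rangle$ gives
$$(\phi_g^* \omega)_Z(x,y) = -\langle \mathrm{Ad}_g Z, [\mathrm{Ad}_g x, \mathrm{Ad}_g y]\rangle = -\langle Z, [x,y]\rangle = \omega_Z(x,y).$$
Hence $\phi_g^* \omega^n = \omega^n$. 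Next, the $\mathrm{Ad}$-invariance of the Killing form yields
$$f_\eta(\phi_{g^{-1}} Z) = \langle \mathrm{Ad}_{g^{-1}} Z, \eta\rangle = \langle Z, \mathrm{Ad}_g \eta\rangle = f_{\tilde\eta}(Z),$$
so $f_{\tilde\eta}^k = (\phi_{g^{-1}})^* f_\eta^k$. Applying the change of variables formula with the symplectomorphism $\phi_{g^{-1}}$ finishes the first part.

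For the $k = 1$ statement, define the linear functional
$$I : \mathfrak{g} \to \mathbb{R}, \qquad I(\eta) = \int_{\mathcal{O}_\xi} f_\eta\, \omega^n.$$
Linearity is clear from the definition of $f_\eta$, and the first part shows that $I$ is constant on adjoint orbits, so $I \in (\mathfrak{g}^*)^G$. Differentiating the invariance $I(\mathrm{Ad}_{\exp(tX)} Y) = I(Y)$ at $t=0$ gives $I([X,Y]) = 0$ for every $X,Y \in \mathfrak{g}$, i.e.\ $I$ vanishes on $[\mathfrak{g},\mathfrak{g}]$. Since $G$ is semisimple, $[\mathfrak{g},\mathfrak{g}] = \mathfrak{g}$, so $I \equiv 0$, as required.

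There is essentially no serious obstacle here; the content of the lemma is the observation that the construction is manifestly $G$-equivariant. The one point worth stating cleanly (and which I would write out in full) is the equality $\phi_g^*\omega = \omega$, since this is what upgrades the elementary $\mathrm{Ad}$-invariance of $f_\eta$ under $\eta \mapsto \mathrm{Ad}_g \eta$ into an invariance of the integrals of its powers. The semisimplicity of $G$ is only used in the final line to kill $(\mathfrak{g}^*)^G$.
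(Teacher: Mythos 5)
Your proof is correct and follows essentially the same route as the paper: pull back by the (symplectomorphic) adjoint action to get invariance of the integrals, then differentiate the invariance to show the linear functional $\eta \mapsto \int_{\mathcal{O}_\xi} f_\eta\,\omega^n$ kills $[\mathfrak{g},\mathfrak{g}]=\mathfrak{g}$. The only cosmetic differences are that you verify $\phi_g^*\omega=\omega$ explicitly where the paper just cites that $\mathrm{Ad}_g$ is an orientation-preserving isometry, and you invoke semisimplicity via $[\mathfrak{g},\mathfrak{g}]=\mathfrak{g}$ where the paper phrases the same fact as the triviality of a Lie algebra homomorphism to $\mathbb{R}$.
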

\begin{proof}
The conditions of the lemma mean there is a $g \in G$ such that $\tilde{\eta} = \mathrm{Ad}_{g}(\eta)$. Hence by the Ad-invariance of the inner product we have
$$ f_{\eta}(Z) = \langle \eta,Z\rangle  = \langle \mathrm{Ad}_{g^{-1}}(\tilde{\eta}),Z\rangle  = \langle \tilde{\eta}, \mathrm{Ad}_{g}(Z)\rangle = f_{\tilde{\eta}}(\mathrm{Ad}_{g}(Z)).$$
In other words, ${f_{\eta} = \mathrm{Ad}_{g}^{\ast}(f_{\tilde{\eta}})}$. As ${\mathrm{Ad}_{g}:\mathcal{O}_{\xi}\rightarrow\mathcal{O}_{\xi}}$ is an orientation preserving isometry, we have
$$
\int_{\mathcal{O}_{\xi}}f_{\eta}^{k} \ \omega^{n}=  \int_{\mathcal{O}_{\xi}}\mathrm{Ad}_{g}^{\ast}(f_{\tilde{\eta}})^{k} \ \omega^{n} = \int_{\mathcal{O}_{\xi}}\mathrm{Ad}^{\ast}_{g}(f_{\tilde{\eta}})^{k} \ (\mathrm{Ad}_{g}^{\ast}\omega^{n}) =  \int_{\mathcal{O}_{\xi}}f_{\tilde{\eta}}^{k} \ \omega^{n}.
$$
To prove the second part of the lemma, we note that the function ${F: G\rightarrow \mathbb{R}}$ given by
$$F(g) = \int_{\mathcal{O}_{\xi}}f_{\mathrm{Ad}_{g}(\eta)} \omega^{n},$$
is constant. Taking the derivative at the identity yields 
$$\int_{\mathcal{O}_{\xi}}f_{[\eta,\zeta]}\omega^{n} = 0,$$
for all $\eta, \zeta \in \mathfrak{g}$. This means that the map
$$\eta \rightarrow \int_{\mathcal{O}_{\xi}}f_{\eta} \ \omega^{n}$$ is a Lie algebra homomorphism. The fact that $G$ is simple means that this map must be the trivial homomorphism and so the result follows. 
\end{proof}
 Next we recall  a theorem of Matsushima \cite{Mat} which says that for any Fano K\"ahler--Einstein manifold $(M,g,J)$ there is an isomorphism between the $(-1/\tau)$-eigenspace, $E_{(-1/\tau)}$, and the Lie algebra of Killing vector fields $\mathfrak{k}$  given by
$$\phi \rightarrow -J\nabla\phi,$$
where $J$ is the complex structure.\\
\\
All the connected, compact, irreducible Riemannian symmetric spaces  can be constructed in the form $M=\textrm{Iso}_{0}/\textrm{Iso}_{p}$ where $\textrm{Iso}_{0}$ is the connected component of the identity of the isometry group of $(M,g)$ and $\textrm{Iso}_{p}$ is the isotropy group of isometries fixing a point. For the spaces $G/H$ in Theorem \ref{CHT} we have
$$\mathfrak{g}\cong \textrm{Lie}(\textrm{Iso})\cong \mathfrak{k}.$$
This map can be realised by the assignment
$$\eta\rightarrow \frac{d}{dt} \bigg|_{t=0} \mathrm{Ad}_{\exp(t\eta)}(Z) = [\eta,Z],$$
for $\eta \in \mathfrak{g}$ and $Z \in \mathcal{O}_{\xi}$. 
\begin{lemma}\label{eflem}
If the K\"ahler--Einstein metric $g_{KE}$ on $\mathcal{O}_{\xi}$ has Einstein constant $\frac{1}{2\tau}$, then the functions $f_{\eta}$ defined in Equation (\ref{ef_def}), satisfy
$$\Delta f_{\eta} = -\frac{1}{\tau}f_{\eta}.$$	
\end{lemma}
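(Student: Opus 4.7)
The plan is to recognize $f_\eta$ as a Killing potential — a smooth function whose Kähler gradient generates an isometry — and then invoke the Matsushima isomorphism recalled just above the lemma. On a Fano Kähler--Einstein manifold with Einstein constant $\frac{1}{2\tau}$, Matsushima's theorem identifies $E_{-1/\tau}$ with the Lie algebra of (real) Killing fields through the map $\phi \mapsto -J\nabla\phi$. Consequently any zero-mean function whose Kähler gradient is Killing must already lie in $E_{-1/\tau}$.

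First I would compute $df_\eta$ pointwise. Since tangent vectors to $\mathcal{O}_\xi$ at $Z$ are realised as $[\zeta,Z]$ for $\zeta\in\mathfrak{g}$, the definition of $f_\eta$ and $\mathrm{Ad}$-invariance of the Killing form give
\begin{equation*}
df_\eta|_Z\bigl([\zeta,Z]\bigr)=\langle[\zeta,Z],\eta\rangle=\langle Z,[\eta,\zeta]\rangle.
\end{equation*}
Next I would pair this with the Kirillov--Kostant--Souriau form $\omega_Z([\zeta_1,Z],[\zeta_2,Z]) = -\langle Z,[\zeta_1,\zeta_2]\rangle$. A one-line comparison shows (up to a universal sign) that $\omega_Z([\eta,Z],\,[\zeta,Z]) = -df_\eta|_Z([\zeta,Z])$, so the Hamiltonian vector field associated to $f_\eta$ is (up to sign) the fundamental vector field $X_\eta(Z)=[\eta,Z]$ of the adjoint action. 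This is exactly the map $\eta \mapsto [\eta,\cdot\,]$ displayed just before the lemma.

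Because the adjoint action of $G$ preserves the $G$-invariant Kähler--Einstein metric on the orbit, $X_\eta$ is a Killing vector field. On the Kähler manifold $\mathcal{O}_\xi$ the Hamiltonian vector field of a real function coincides (up to sign) with $J\nabla f_\eta$, so $J\nabla f_\eta$ is Killing, i.e.\ $f_\eta$ is a Killing potential. Combining this with the zero-mean statement from Lemma \ref{f_mv_lem} and with Matsushima's isomorphism recalled above, $f_\eta$ must lie in $E_{-1/\tau}$, which is exactly $\Delta f_\eta = -\frac{1}{\tau}f_\eta$.

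The only real nuisance is bookkeeping of signs: the precise sign with which the Hamiltonian vector field agrees with $J\nabla f_\eta$ depends on conventions for $\omega$, for the KKS formula, and for the Kähler identity $\omega(X,Y) = g(JX,Y)$. None of these choices alters whether $f_\eta$ is an eigenfunction or what its eigenvalue is; they only determine whether $f_\eta$ or $-f_\eta$ matches the particular Matsushima map $\phi\mapsto -J\nabla\phi$, and these signs can be absorbed consistently at the end.
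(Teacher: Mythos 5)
Your proposal is correct and follows essentially the same route as the paper: the paper likewise computes the derivative of $f_{\eta}$ along the orbit, identifies $\nabla f_{\eta}$ with $J$ applied to the Killing field generated by $\eta$ via the Kirillov--Kostant--Souriau form, and then concludes by combining Matsushima's isomorphism with the mean-zero property from Lemma \ref{f_mv_lem}. Your phrasing in terms of Hamiltonian vector fields versus the paper's direct computation of $\frac{dF}{dt}$ at a point is only a cosmetic difference.
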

\begin{proof}
	Let $X \in \mathfrak{m}$ and consider 
	$$F(t)=f_{\eta}(\mathrm{Ad}_{\exp(tX)}\xi) = \langle  \eta, \mathrm{Ad}_{\exp(tX)}\xi \rangle = \langle \mathrm{Ad}_{\exp(-tX)}\eta, \xi \rangle. $$
	Taking derivatives we see
	$$\dfrac{dF}{dt}\Bigr\vert_{t=0} =-\langle \xi, [X,\eta]\rangle = -\omega(X,\eta) = g_{KE}(X,J\eta).$$
	Hence $\nabla f_{\eta} =J\eta$ (here we identify $\eta$ with the Killing field it generates on $\mathcal{O}_{\xi}$).  As $\eta$ is a Killing field we invoke Matsushima's theorem  which says the map
	$$ \phi \rightarrow \nabla \phi,$$
	is an isomorphism between the eigenspace $E_{-1/\tau}$ and $J\mathfrak{k}$ where $\mathfrak{k}$ is the space of Killing fields. Hence, as $f_{\eta}$ has mean value zero by Lemma \ref{f_mv_lem}, we see $f_{\eta}$ is an eigenfunction of the Laplacian with eigenfunction $-\frac{1}{\tau}$.
\end{proof}
An element $X\in \mathfrak{g}$ is said to be $\textit{regular}$ if its centraliser in $\mathfrak{g}$ is of smallest possible dimension. The regular elements of $\mathfrak{t}$ we will denote by $\mathfrak{t}_{\textrm{reg}}$. The following lemma will be vital in our analysis.
\begin{lemma}[Bott \cite{Bott}]\label{Bottlem}
Let $D \in \mathfrak{t}_{\mathrm{reg}}$ be regular. Then the function $f_{D}$ defined by Equation (\ref{ef_def}) has the following properties.
\begin{enumerate} 
 \item  The critical points of $f_{D}$ are non-degenerate and of even index.
 \item  The critical points are the orbit of $\xi \in \mathfrak{t}$ under the action of Weyl group $W$.
\end{enumerate}	
\end{lemma}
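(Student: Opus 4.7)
The plan is to handle the two claims together by first describing $\nabla f_D$ geometrically, then analysing the Hessian at a critical point through the root space decomposition. The underlying picture is symplectic: $f_D$ is the component in the direction $D$ of the moment map for the $T$-action on $\mathcal{O}_\xi$, so the result is essentially a statement about the weights of the linearised torus action at fixed points.

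Part (2) is immediate from the gradient computation in the proof of Lemma \ref{eflem}, which gives $\nabla f_D = JD$ with $D$ identified as the Killing field $Z\mapsto[D,Z]$. Since $J$ is an isomorphism on every tangent space, the critical set is $\{Z\in \mathcal{O}_\xi : [D,Z]=0\}$, i.e.\ $\mathcal{O}_\xi$ intersected with the centraliser of $D$ in $\mathfrak{g}$. Regularity of $D$ forces this centraliser to be $\mathfrak{t}$, so by the classical theorem of Bott quoted earlier the critical set is exactly $\mathcal{O}_\xi\cap\mathfrak{t}$, namely the Weyl orbit of $\xi$.

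For (1) I would compute the Hessian at $Z_0\in\mathcal{O}_\xi\cap\mathfrak{t}$ by parameterising nearby points as $\mathrm{Ad}_{\exp(tX)}(Z_0)$ with $X\in\mathfrak{m}_{Z_0}$. Two differentiations and one use of $\mathrm{Ad}$-invariance give
\begin{equation*}
\Hess(f_D)(X,X) = \langle [X,[X,Z_0]],D\rangle = -\langle[X,Z_0],[X,D]\rangle.
\end{equation*}
To diagonalise this, use the root decomposition: for each pair $\{\alpha,-\alpha\}$ with $\langle\alpha,Z_0\rangle\neq 0$ pick $r_\alpha\in R_\alpha$ nonzero and set $X_\alpha = r_\alpha+\bar r_\alpha$, $Y_\alpha = i(r_\alpha-\bar r_\alpha)$; these real vectors together span $\mathfrak{m}_{Z_0}$. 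The defining relations $[Z_0,r_\alpha]=i\langle\alpha,Z_0\rangle r_\alpha$ and $[D,r_\alpha]=i\langle\alpha,D\rangle r_\alpha$ imply that on the real $2$-plane $\mathrm{span}_{\RR}(X_\alpha,Y_\alpha)$ both $\mathrm{ad}(Z_0)$ and $\mathrm{ad}(D)$ act as rotations scaled by $\langle\alpha,Z_0\rangle$ and $\langle\alpha,D\rangle$ respectively, whence
\begin{equation*}
\Hess(f_D)(X_\alpha,X_\alpha) = -\langle\alpha,Z_0\rangle\langle\alpha,D\rangle\,\|Y_\alpha\|^2,
\end{equation*}
with the identical scalar on $Y_\alpha$.

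Non-degeneracy then follows because $D\in\mathfrak{t}_{\mathrm{reg}}$ forces $\langle\alpha,D\rangle\neq 0$ for every root, while $\langle\alpha,Z_0\rangle\neq 0$ by construction. The index is automatically even, since the negative-definite subspace is the orthogonal direct sum of those real $2$-planes on which $\langle\alpha,Z_0\rangle\langle\alpha,D\rangle>0$. The main point requiring care is that distinct root planes be mutually $\Hess$-orthogonal, but this drops out of $\mathrm{Ad}$-invariance: for $\alpha\neq\pm\beta$ the vectors $[X_\alpha,Z_0]$ and $[X_\beta,D]$ lie in Killing-orthogonal root spaces, so the cross Hessian term vanishes. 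Beyond this bookkeeping the proof reduces to linear algebra in a single weight plane.
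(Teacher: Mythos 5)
Your argument is correct, but note that the paper does not prove this lemma at all: it is quoted verbatim from Bott \cite{Bott}, so there is no internal proof to compare against. What you have supplied is essentially the classical argument (the one underlying Bott's result, and the standard proof that components of a moment map for a torus action are Morse functions with even indices), and it is sound. Part (2) via $\nabla f_{D}=JD$ and the regularity of $D$ correctly reduces the critical set to $\mathcal{O}_{\xi}\cap\mathfrak{z}(D)=\mathcal{O}_{\xi}\cap\mathfrak{t}$, which is the $W$-orbit of $\xi$ by the classical theorem already quoted in Section 2.2. For part (1), the identity $\Hess(f_{D})(X,X)=-\langle[X,Z_{0}],[X,D]\rangle$ is right, and the block-diagonalisation over root pairs $\{\alpha,-\alpha\}$ gives non-degeneracy from $\langle\alpha,D\rangle\neq0$ ($D$ regular) and $\langle\alpha,Z_{0}\rangle\neq0$ (these roots span $\mathfrak{m}_{Z_{0}}$), with even index because each block is a $2$-plane. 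Two small points are worth making explicit rather than leaving implicit: first, the quadratic form in $X\in\mathfrak{m}_{Z_{0}}$ computes the intrinsic Hessian only after transporting it through the isomorphism $X\mapsto[X,Z_{0}]$ onto $T_{Z_{0}}\mathcal{O}_{\xi}$, which is harmless since a linear isomorphism preserves rank and index; second, the within-plane cross term $\Hess(f_{D})(X_{\alpha},Y_{\alpha})$ vanishes because $\langle X_{\alpha},Y_{\alpha}\rangle=\langle r_{\alpha}+\bar r_{\alpha},\,i(r_{\alpha}-\bar r_{\alpha})\rangle=0$ (the Killing form pairs $R_{\alpha}$ only with $R_{-\alpha}$ and is symmetric), a check of the same kind as your cross-plane orthogonality observation. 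With those two sentences added, this is a complete self-contained proof of the cited lemma.
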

We remark that $f_{D}$ is a Hamiltonian function for the  action on $\mathcal{O}_{\xi}$ generated by $D$. If $\Lambda \subset \mathfrak{t}$ is the weight lattice, then choosing $D\in \Lambda\otimes_{\mathbb{Z}}\mathbb{Q}$ will generate an $\mathbb{S}^{1}$-action. We denote the set of elements in $\mathfrak{t}$ that generate closed orbits by $\mathfrak{t}_{c}$.  The set $\mathfrak{t}_{reg}\cap \mathfrak{t}_{c}$ is dense in $\mathfrak{t}$ (with respect to the Euclidean topology). It turns out that the eigenfunctions $f_{D}$ generated by  $D \in \mathfrak{t}_{reg}\cap \mathfrak{t}_{c}$  are the only ones that one needs to check the stability condition (\ref{KKstab}) on.
\begin{proposition}\label{Proptorstab}
Let $G/H=\mathcal{O}_{\xi}$ be one of the symmetric spaces in Theorem \ref{CHT} and let $T$ be a maximal torus in $G$ with Lie algebra $\mathfrak{t}$. Suppose that there exists $f \in E_{-(1/\tau)}$ such that 
$$\int_{\mathcal{O}_{\xi}}f^{3}\omega^{n} \neq0,$$
then there exists $D\in \mathfrak{t}_{reg}\cap \mathfrak{t}_{c}$ such that
$$\int_{\mathcal{O}_{\xi}}f_{D}^{3} \ \omega^{n} \neq0.$$
\end{proposition}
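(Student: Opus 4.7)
The plan is to reduce the problem to eigenfunctions of the form $f_D$ with $D \in \mathfrak{t}$, and then exploit the fact that the stability integral, viewed as a function of $D$, is a polynomial (hence continuous) on $\mathfrak{t}$, combined with the density of $\mathfrak{t}_{reg}\cap \mathfrak{t}_c$ in $\mathfrak{t}$.

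First I would establish that the linear map $\eta \mapsto f_\eta$ from $\mathfrak{g}$ onto $E_{-1/\tau}$ is a bijection. By Lemma \ref{eflem} the image lies in $E_{-1/\tau}$, and the computation of that lemma shows $\nabla f_\eta = J\eta$, where $\eta$ is identified with the Killing field it generates on $\mathcal{O}_\xi$. Composing with Matsushima's isomorphism $E_{-1/\tau}\to \mathfrak{k}$, $\phi\mapsto -J\nabla\phi$, therefore recovers (up to a sign) the identification $\mathfrak{g}\cong \mathfrak{k}$ that holds for the symmetric spaces in Theorem \ref{CHT}. This shows $\eta\mapsto f_\eta$ is an isomorphism, so the hypothesis of the proposition yields some $\eta_0\in\mathfrak{g}$ with $\int_{\mathcal{O}_\xi}f_{\eta_0}^3\,\omega^n\neq 0$.

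Next I would appeal to the classical fact quoted in the excerpt that every adjoint orbit meets $\mathfrak{t}$, producing $D_0 \in \mathfrak{t}$ in the same $G$-orbit as $\eta_0$. Lemma \ref{f_mv_lem} then gives
\[
\int_{\mathcal{O}_\xi}f_{D_0}^3\,\omega^n \;=\;\int_{\mathcal{O}_\xi}f_{\eta_0}^3\,\omega^n\;\neq\;0,
\]
so the non-vanishing of the stability integral is realised at some, possibly non-regular, element of $\mathfrak{t}$.

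Finally, I would consider the map $\Phi\colon\mathfrak{t}\to\mathbb{R}$ defined by $\Phi(D)=\int_{\mathcal{O}_\xi}f_D^3\,\omega^n$. Because $f_D(Z)=\langle Z,D\rangle$ is linear in $D$, the map $\Phi$ is a homogeneous polynomial of degree three on $\mathfrak{t}$, and in particular continuous. Since $\Phi(D_0)\neq 0$, the non-vanishing locus of $\Phi$ is a non-empty open subset of $\mathfrak{t}$; density of $\mathfrak{t}_{reg}\cap \mathfrak{t}_c$ in $\mathfrak{t}$ then supplies an element $D\in \mathfrak{t}_{reg}\cap \mathfrak{t}_c$ lying in this open set, giving $\int_{\mathcal{O}_\xi}f_D^3\,\omega^n\neq 0$ as required.

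The only delicate step is the first one: carefully identifying $E_{-1/\tau}$ with $\{f_\eta : \eta\in\mathfrak{g}\}$ requires combining Matsushima's theorem, the computation $\nabla f_\eta = J\eta$, and the identification $\mathrm{Lie}(\mathrm{Iso}_0)\cong \mathfrak{g}$ for the symmetric spaces in Theorem \ref{CHT}. Once this is in hand, the remainder of the proof is formal, using Ad-invariance of the integral (Lemma \ref{f_mv_lem}), conjugacy of maximal tori, and the density of $\mathfrak{t}_{reg}\cap \mathfrak{t}_c$.
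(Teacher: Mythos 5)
Your proposal is correct and follows essentially the same route as the paper: identify $f$ with some $f_{\eta}$ via the isomorphism $\mathfrak{g}\cong E_{-1/\tau}$, conjugate $\eta$ into $\mathfrak{t}$ using Lemma \ref{f_mv_lem} to preserve the integral, and then use density of $\mathfrak{t}_{reg}\cap\mathfrak{t}_{c}$. The only difference is that you make explicit the continuity argument behind the density step (the integral is a homogeneous cubic polynomial in $D$, so its non-vanishing locus is open), which the paper leaves implicit here and records separately as Lemma \ref{homflem}.
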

\begin{proof}
As the map $\varrho:\mathfrak{g}\rightarrow E_{-1/\tau}$ given by $\varrho(X) = f_{X}$ is an isomorphism, there must exist $\eta\in \mathfrak{g}$ such that $f=f_{\eta}$. As remarked in Section 2.2, given a fixed maximal torus $T$ with Lie algebra $\mathfrak{t}$, any element of $\mathfrak{g}$ is in the adjoint orbit of some element in $\mathfrak{t}$. If $D = \mathrm{Ad}_{g}(\eta)\in \mathfrak{t}$ for some $g \in G$ then, by Lemma \ref{f_mv_lem}
$$ \int_{\mathcal{O}_{\xi}}f_{D}^{3} \ \omega^{n} \neq 0.$$   
Finally we see we can take $D\in \mathfrak{t}_{reg}\cap \mathfrak{t}_{c}$ as this set is dense in $\mathfrak{t}$.
\end{proof}
\subsection{Computing integrals}
We will need to be able to compute integrals of powers of $f$ over the orbit.  This is achieved by the famous Duistermaat--Heckman formula \cite{DH} (see \cite{MS} for the form we are using). On a symplectic manifold $(M^{2n},\omega)$ with a Hamiltonian circle action that has an associated Hamiltonian function $\varphi$ with non-degenerate critical points, the  Duistermaat--Heckman formula  is
$$\int_{M}e^{-t\varphi}\omega^{n}= \frac{n!}{t^{n}}\sum_{q \textrm{ critical }}\frac{e^{-t\varphi(q)}}{\varpi(q)},$$
where the $\varpi(q)$ is the product of the weights of the circle action that is induced on the tangent space at each fixed point $q$.\\
\\
The holomorphic tangent space at a critical point $q$ is identified with the span of the root spaces $R_{\alpha}$ satisfying ${\langle q, \alpha\rangle>0}$, we will denote this set $P(q)$. The derivative of ${r_{\alpha} \rightarrow ad_{\exp(tD)}r_{\alpha}}$ at $t=0$  is $\langle \alpha,D \rangle r_{\alpha}$. Hence the weight at each fixed point $q \in \mathfrak{t}$ is given by
$$\varpi(q) = \prod_{\alpha \in P(q)}\langle \alpha, D\rangle.$$
\begin{proposition}
Let $D\in \mathfrak{t}_{reg}\cap\mathfrak{t}_{c}$ and let ${f_{D}(Z) = \langle Z, D\rangle}.$ Then
\begin{equation}\label{DHGroup}
\int_{\mathcal{O}_{\xi}}e^{-tf}\omega^{n} = \frac{n!}{t^{n}}\sum_{w\in W \backslash \textrm{stab}(\xi)}\frac{e^{-tf(w\cdot \xi)}}{\prod_{\alpha \in P(w\cdot \xi)}\langle \alpha, D\rangle},
\end{equation}
where $n$ is the complex dimension of the orbit $\mathcal{O}_{\xi}$.
\end{proposition}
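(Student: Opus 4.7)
The plan is to apply the Duistermaat--Heckman formula quoted above to the Hamiltonian circle action on $\mathcal{O}_{\xi}$ generated by $D$, with Hamiltonian function $\varphi = f_D$.

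First I would verify the hypotheses. Since $D \in \mathfrak{t}_{c}$, the one-parameter subgroup $\exp(tD)$ closes up to an $\mathbb{S}^{1}$-action on $G$, and hence on $\mathcal{O}_{\xi}$ through the adjoint representation. The computation in the proof of Lemma \ref{eflem} shows $\nabla f_{D} = JD$, i.e.\ $f_{D}$ is the moment map for this $\mathbb{S}^{1}$-action with respect to the Kirillov--Kostant--Souriau form $\omega$. Because $D$ is regular, Lemma \ref{Bottlem} guarantees the critical points of $f_{D}$ are non-degenerate (and of even index), so the Duistermaat--Heckman formula applies.

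Next I would identify the critical set. By Lemma \ref{Bottlem}(2), the critical points are precisely the Weyl orbit $W\cdot \xi \subset \mathfrak{t}$; distinct cosets in $W/\mathrm{stab}(\xi)$ give distinct critical points, so the sum over critical points may be rewritten as a sum over $w \in W/\mathrm{stab}(\xi)$, with the $w$-th critical point being $w\cdot \xi$ and critical value $f_{D}(w\cdot \xi)$.

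Now I would compute the weights at a critical point $q = w\cdot \xi$. Using the root space decomposition from Section 2.2, the holomorphic tangent space $\mathfrak{m}^{(1,0)}$ at $q$ is identified with the span of the root spaces $R_{\alpha}$ with $\alpha \in P(q) = \{\alpha : \langle \alpha, q\rangle >0\}$. Since $[D, r_{\alpha}] = \langle \alpha, D\rangle\, r_{\alpha}$ (after the standard identification), differentiating the adjoint action gives that $D$ acts on $R_{\alpha}$ with weight $\langle \alpha, D\rangle$. Therefore
\begin{equation*}
\varpi(q) = \prod_{\alpha \in P(q)} \langle \alpha, D\rangle,
\end{equation*}
exactly as stated in the displayed weight formula preceding the proposition. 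Plugging these critical values and weights into the Duistermaat--Heckman formula yields (\ref{DHGroup}).

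The only subtle point is checking that $f_{D}$ genuinely is a moment map for the $\mathbb{S}^{1}$-action (rather than merely for the $\mathbb{R}$-action of $\exp(tD)$), and that the identification of the holomorphic tangent space with $\bigoplus_{\alpha \in P(q)} R_{\alpha}$ is equivariant for the $\mathbb{S}^{1}$-action; both are standard consequences of the $G$-invariance of the complex and symplectic structures constructed in Section 2.2, so no essential obstacle arises.
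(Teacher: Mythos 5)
Your proposal is correct and follows essentially the same route as the paper's own (very brief) proof: apply the Duistermaat--Heckman formula to the Hamiltonian $f_{D}$ of the circle action generated by $D$, invoke Lemma \ref{Bottlem} to identify the non-degenerate critical points with the Weyl orbit $W\cdot\xi$, and read off the weights from the root-space decomposition discussed just before the proposition. The extra care you take in checking that $\exp(tD)$ closes up to an $\mathbb{S}^{1}$-action and that $f_{D}$ is a genuine moment map is a worthwhile elaboration of points the paper leaves implicit.
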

\begin{proof}
We simply apply the Duistermaat--Heckman formula to the function $f_{D}$ which is the Hamiltonian for the circle action generated by $D\in \mathfrak{t}_{reg}\cap\mathfrak{t}_{c}$. As the element $D$ is regular, Lemma \ref{Bottlem} says the critical points are non-degenerate and precisely the orbit of $\xi$ under the action of the Weyl group $W$. The value of the weights follows from the previous discussion.  
\end{proof}
Formulae similar to (\ref{DHGroup}) occur in the theory of the orbit method developed by Kirillov \cite{Kir}. In this theory integrals over certain coadjoint orbits correspond to the characters of the representation corresponding to the orbit. Similar expressions also appear elsewhere in the literature, for example in the papers of Berline and Vergne \cite{BV}, Paradan \cite{Par}, and Rossman \cite{Ros}.
\section{The proof of Theorem \ref{ThmCST}} \label{sec:3}

The proof of Theorem \ref{ThmCST} is based on the classification of certain invariant polynomial algebras.  If we let $G$ be one of the compact simple Lie groups appearing as the larger group in Theorem \ref{CHT}, then we denote by $\mathbb{R}[\mathfrak{g}]^{G}$ the graded algebra of $G$-invariant polynomials functions on $\mathfrak{g}$. If $T$ is a maximal torus of $G$ with Lie algebra $\mathfrak{t}$ and Weyl group $W$, then the Chevalley restriction theorem (see for example Section 23 in \cite{Hum}) yields an isomorphism ${\mathcal{I}:\mathbb{R}[\mathfrak{g}]^{G}\rightarrow \mathbb{R}[\mathfrak{t}]^{W}}$ where $\mathcal{I}(P)$ is simply restriction of a polynomial $P\in \mathbb{R}[\mathfrak{g}]^{G}$ to $\mathfrak{t}$. By the Chevalley--Shephard--Todd theorem and the Shephard--Todd classification of complex reflection groups, see \cite{Coxe} and \cite{ST}, the algebra ${\mathbb{R}[\mathfrak{t}]^{W}}$  is a polynomial algebra, with generators of well defined degrees. We list them in the following table with the groups in brackets having Lie algebra with the corresponding root system:
\begin{center}
\begin{tabular}{ |c|c| } 
 \hline
 Root system  & Degree of generators  in ${\mathbb{R}[\mathfrak{t}]^{W}}$  \\
 \hline 
 $A_{n} \ (SU(n+1))$ &  $2,3,\dots, n+1$  \\ 
 $B_{n} \ (SO(2n+1))$ &  $2,4, \dots, 2n$\\ 
 $C_{n} \ (Sp(n))$ & $2,4, \dots, 2n$\\
 $D_{n} \ (SO(2n))$ & $2,4,\dots, 2(n-1); n$ \\
 $E_{6}$ & $2,5,6,8,9,12$\\
 $E_{7}$ & $2,6,8,10,12,14,18$\\
 \hline
\end{tabular}
\end{center}
\vspace{10pt}
It follows from the first entry in the table above that the space of degree 3, $SU(n)$ invariant polynomial functions on $\mathfrak{su}(n)$ is one dimensional. Indeed, the space of degree 0 elements is spanned by the constant polynomial 1, and the multiplication of any pair of generators of positive degree is of degree at least 4. (Note that the results in \cite{Coxe,Hum,ST} cited above concern semi-simple complex Lie groups and complex reflection groups. However, the complexification $G_{\mathbb{C}}$ of a simple compact Lie group $G$ is semi-simple, and $\mathbb{C} \otimes_{\mathbb{R}} \mathbb{R}[\mathfrak{g}]^{G} \cong \mathbb{C}[\mathfrak{g}_{\mathbb{C}}]^{G_{\mathbb{C}}}$ as graded algebras where $\mathfrak{g}_{\mathbb{C}}$ is the Lie algebra of $G_{\mathbb{C}}$. In particular, $\mathbb{R}[\mathfrak{g}]^{G}$ has a set of generators of the same degrees as that of $\mathbb{C}[\mathfrak{g}_{\mathbb{C}}]^{G_{\mathbb{C}}}$.)

The following lemma shows that the stability integral (\ref{KKstab}) can be thought of as an element of ${\mathbb{R}[\mathfrak{g}]^{G}}$.
\begin{lemma}\label{homflem}
Let $I^{k}:\mathfrak{g}\rightarrow\mathbb{R}$ be defined by
$$I^{k}(\eta) = \int_{\mathcal{O_{\xi}}}(f_{\eta})^{k}\omega^{n}.$$
Then $I^{k}$ is a (possibly trivial) $\mathrm{Ad}_{G}$-invariant, homogenous, degree$-k$ polynomial.
\end{lemma}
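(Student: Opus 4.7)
The plan is to verify the three claimed properties of $I^k$ directly from the definition $f_\eta(Z)=\langle Z,\eta\rangle$, which is linear in $\eta$. Most of the work has already been done in Lemma \ref{f_mv_lem}; what remains is to check that $I^k$ is polynomial and homogeneous of the right degree in $\eta$.

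First I would establish polynomiality. Fix a basis $e_1,\dots,e_m$ of $\mathfrak{g}$ and write $\eta=\sum_i \eta_i e_i$. By bilinearity of the Killing form,
\begin{equation*}
f_\eta(Z)^k=\langle Z,\eta\rangle^k=\sum_{i_1,\dots,i_k}\eta_{i_1}\cdots\eta_{i_k}\,\langle Z,e_{i_1}\rangle\cdots\langle Z,e_{i_k}\rangle.
\end{equation*}
Integrating against $\omega^n$ over $\mathcal{O}_\xi$ commutes with the finite sum, so
\begin{equation*}
I^k(\eta)=\sum_{i_1,\dots,i_k}\eta_{i_1}\cdots\eta_{i_k}\int_{\mathcal{O}_\xi}\langle Z,e_{i_1}\rangle\cdots\langle Z,e_{i_k}\rangle\,\omega^n,
\end{equation*}
which exhibits $I^k$ as a polynomial in the coordinates $\eta_1,\dots,\eta_m$; the coefficients are finite because $\mathcal{O}_\xi$ is compact and the integrands are smooth. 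Every monomial has total degree exactly $k$, so $I^k$ is homogeneous of degree $k$. Equivalently, for $\lambda\in\mathbb{R}$ one has $f_{\lambda\eta}=\lambda f_\eta$, hence $I^k(\lambda\eta)=\lambda^k I^k(\eta)$.

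For $\mathrm{Ad}_G$-invariance I would simply invoke Lemma \ref{f_mv_lem}: if $\tilde\eta=\mathrm{Ad}_g(\eta)$ then the two elements lie in the same $G$-orbit, so
\begin{equation*}
I^k(\mathrm{Ad}_g(\eta))=\int_{\mathcal{O}_\xi}f_{\tilde\eta}^{\,k}\,\omega^n=\int_{\mathcal{O}_\xi}f_\eta^{\,k}\,\omega^n=I^k(\eta).
\end{equation*}
There is essentially no obstacle here; the only subtle point worth flagging is that one should note $I^k$ is allowed to be the zero polynomial (it will be whenever $k$ does not occur as a degree of a generator of $\mathbb{R}[\mathfrak{g}]^G$, which is precisely how this lemma feeds into the Chevalley--Shephard--Todd argument for Theorem \ref{ThmCST}). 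Hence the parenthetical ``possibly trivial'' in the statement.
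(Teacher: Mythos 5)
Your proposal is correct and follows essentially the same route as the paper: the paper's proof likewise cites Lemma \ref{f_mv_lem} for the $\mathrm{Ad}_{G}$-invariance and disposes of polynomiality and homogeneity by picking a basis of $\mathfrak{g}$ and computing in coordinates, which is exactly the expansion you carry out explicitly. You have simply written out the details the paper leaves to the reader; there is nothing to add or correct.
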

\begin{proof}
The $\mathrm{Ad}_{G}$-invariance was demonstrated in Lemma \ref{f_mv_lem}. The fact that the function is a homogenous, degree-$k$ polynomial is straightforward if one picks a basis of $\mathfrak{g}$ and then calculates in coordinates.
\end{proof}
We now give the proof of Theorem \ref{ThmCST}. 
\begin{proof}
We note by Proposition \ref{Proptorstab}, we might as well assume that a destabilising eigenfunction is of the form $f_{D}$ for $D \in \mathfrak{t}_{reg}\cap\mathfrak{t}_{c}$. If the symmetric space is not a Grassmanian then it is of the form $G/H$ with $G$ being one of the groups $B_{n},D_{n},E_{6}$ or $E_{7}$. Lemma  \ref{homflem} shows that $I^{3}$ is a degree $3$, homogenous $G$-invariant polynomial and so by the Chevalley restriction theorem yields a degree 3 element of  ${\mathbb{R}[\mathfrak{t}]^{W}}$.  However, the above table shows this function must vanish unless $G=D_{3} = SO(6) =A_{3}$. One can show that $SO(6)/U(3) \cong \mathbb{CP}^{3}$ and  $Q_{4}  \cong Gr(2,4)$  and so the stability of these spaces follows from the type $A_{n}$ consideration.
\end{proof}

We also note that this method of proof also shows that the functions $f_{\eta}$ defined by Equation (\ref{ef_def}) have mean value $0$ (which was demonstrated explicitly in Lemma \ref{f_mv_lem}). This follows as there are no non-zero homogeneous, degree $1$ polynomials that are invariant under the Weyl groups of the compact connected Lie groups we are considering in this article. 
\section{Combinatorial properties of certain determinants} \label{sec:4}
\subsection{General Case}
In order to prove Theorem \ref{ThmGr} we first need to collect some results on the power series of certain matrices that will appear after the manipulation of the righthand side of Equation (\ref{DHGroup}).  For ${m_{1},m_{2},\dots,m_{n} \in \mathbb{R}}$ and ${0<k \leq [n/2]}$ we consider the matrix valued function ${\mathcal{M}:\mathbb{R}\rightarrow \mathrm{Mat}^{n\times n}_{\mathbb{R}}}$ given by 
\begin{equation}\label{DHMatrix}
\mathcal{M}(t) = \left(
\begin{array}{cccc}
e^{-m_{1}t} & e^{-m_{2}t} & \dots  & e^{-m_{n}t} \\ 
e^{-m_{1}t}m_{1} & e^{-m_{2}t}m_{2} & \dots  & e^{-m_{n}t}m_{n}\\
\vdots &\vdots &\vdots &\vdots\\
e^{-m_{1}t}m_{1}^{k-1} & e^{-m_{2}t}m_{2}^{k-1} & \dots  & e^{-m_{n}t}m_{n}^{k-1}\\ 
1& 1 & \dots  & 1 \\ 
m_{1} & m_{2} & \dots  & m_{n} \\ 
\vdots &\vdots &\vdots &\vdots\\
m_{1}^{n-k-1} & m_{2} ^{n-k-1}& \dots  & m_{n}^{n-k-1} 
\end{array}
\right).
\end{equation}
 We will be interested in computing the derivatives of $\det(\mathcal{M}(t))$ when $t=0$; it is therefore useful to think of $\det(\mathcal{M}(t))$ as the sum of various products of $k$ functions. To compute the $p^{th}$ derivative we can use the multinomial version of the Leibniz rule   
$$\dfrac{d^{p}}{dt^{p}}\det(\mathcal{M}(t)) = \sum_{d_{1}+d_{2}+\dots+d_{n}=p}\left(\dfrac{p!}{ d_{1}! d_{2}!\dots d_{n}!}\right)\det(\mathcal{M}(R_{1}^{(d_{1})}, R_{2}^{(d_{2})},\dots,R_{n}^{(d_{n})})), $$
where $d_{i}\in \mathbb{N}\cup\{0\}$ and $\mathcal{M}(R_{1}^{(d_{1})}, R_{2}^{(d_{2})},\dots,R_{n}^{(d_{n})})$ is the matrix formed by taking $d_{i}$ derivatives of the terms in the $i^{th}$ row (we note whenever a non-zero derivative is applied to the final $n-k$ rows the term in the sum will vanish).\\
\\
It is clear that, evaluating at $t=0$, this formula is going to require the calculation of determinants of matrices of the form
$$
A =\left(
\begin{array}{cccc}

m_{1}^{e_{1}} & m_{2}^{e_{1}} & \dots & m_{n}^{e_{1}}\\ 
m_{1}^{e_{2}} & m_{2}^{e_{2}} & \dots & m_{n}^{e_{2}}\\ 
\vdots &\vdots &\vdots &\vdots\\
m_{1}^{e_{k}} & m_{2}^{e_{k}} & \dots & m_{n}^{e_{k}}\\
m_{1}^{n-k-1} & m_{2} ^{n-k-1}& \dots  & m_{n}^{n-k-1}\\
m_{1}^{n-k-2} & m_{2}^{n-k-2} & \dots  & m_{n}^{n-k-2} \\ 
\vdots &\vdots &\vdots &\vdots\\
m_{1} & m_{2} & \dots & m_{n}\\
1 & 1 & \dots & 1
\end{array}
\right),
$$
for exponents ${e_{1},e_{2},\dots,e_{k}\in \mathbb{N}}$. In fact, a matrix of the form $A$ that gives a non-zero determinant can be written (after possibly reordering rows) in the form
\begin{equation}\label{lambdamat}
A_{ij} = m_{j}^{\lambda_{i}+n-i},
\end{equation}
for some vector $\lambda = (\lambda_{1}, \lambda_{2},\dots, \lambda_{n}) \in\mathbb{Z}_{\geq 0}^{n}$ with $\lambda_{i}\geq \lambda_{i+1}$ and with $\lambda_{i}=0$ for $i>k$. Such determinants are all multiples of the Vandermonde determinant $V$ given by
$$ V = |m_{j}^{n-i}| = \prod_{1\leq i<j\leq n}(m_{i}-m_{j}),$$
which is the determinant of the $\lambda=(0,0,\dots,0)$ case of Equation (\ref{lambdamat}). The quotients formed this way turn out to be part of a very well-known set of functions known as the Schur polynomials.
\begin{definition}[Schur polynomial]
	Given $\lambda = (\lambda_{1}, \lambda_{2},\dots, \lambda_{n}) \in\mathbb{Z}_{\geq 0}^{n}$ with $\lambda_{i}\geq \lambda_{i+1}$ the Schur polynomial $S_{\lambda}$ is given by
	$$S_{\lambda}(m_{1},m_{2},\dots,m_{n}) = \frac{|m_{j}^{\lambda_{i}+n-i}|}{V}.$$
\end{definition}
The Schur polynomial $S_{\lambda}$ is a homogeneous, $\textrm{Sym}_{n}$-invariant multinomial of degree $\sum_{i=1}^{n}\lambda_{i}$.  It is straightforward to write a short list of these in degrees 0 to 3:
\begin{align*}
S_{(0,0,\dots,0)} &= 1,\\
S_{(1,0,\dots,0)} &= m_{1}+m_{2}+\dots+m_{n},\\
S_{(2,0,\dots,0)} &= \sum_{i=1}^{n}m_{i}^{2}+\sum_{1\leq i<j\leq n}m_{i}m_{j}, \\
S_{(1,1,0,\dots,0)} &= \sum_{1\leq i<j\leq n}m_{i}m_{j}, \\
S_{(3,0,0,\dots,0)} &=\sum_{i=1}^{n}m_{i}^{3}+\sum_{1\leq i\neq j\leq n}m_{i}m_{j}^{2}+\sum_{1\leq i<j<l\leq n}m_{i}m_{j}m_{l}, \\
S_{(2,1,0,\dots,0)} &= \sum_{1\leq i\neq j\leq n}m_{i}m_{j}^{2}+2\sum_{1\leq i<j<l\leq n}m_{i}m_{j}m_{l},\\
S_{(1,1,1,0,\dots,0)} &= \sum_{1\leq i<j<l\leq n}m_{i}m_{j}m_{l}.
\end{align*}

The set of  $\textrm{Sym}_{n}$-invariant multinomials in $m_{1}, m_{2}, \dots, m_{n}$ of a fixed degree form a vector space, and the Schur polynomials form a basis for this space (see for example Appendix A of \cite{FH}).\\  
\\
In order to keep track of signs when reordering the rows of the relevant matrices, we note the following lemma, the proof of which we leave to the reader. 
\begin{lemma}\label{reordlem}
Let $\sigma\in \mathrm{Sym}_{N}$ be the permutation 
$$
\sigma = \left(\begin{array}{cccccc}
1 & 2 & 3 &\dots & N-1 & N\\
N & N-1 & N-2 & \dots & 2 & 1
\end{array}\right).
$$
Then $\mathrm{sgn}(\sigma)= (-1)^{\tau(N)}$, where
\end{lemma}
\begin{equation}\label{reordlemeq}
\tau(N) = \left\{\begin{array}{c}0 \ \mathrm{ if } \ N \equiv \ 0,1 \ (\mathrm{mod } \ 4),\\
1  \ \mathrm{ if } \ N\equiv \ 2,3 \ (\mathrm{mod } \ 4).
\end{array} \right.
\end{equation}
\begin{lemma}\label{AL1}
Let $k,n,$ and $\mathcal{M}(t)$ be as in Equation (\ref{DHMatrix}) and let $V$ be the Vandermonde determinant.  Then the power series expansion about $0$ of $\det(\mathcal{M}(t))$ begins
	$${\det(\mathcal{M}(t))} =  \dfrac{\varepsilon_{k,n}Vc_{0}}{(k(n-k))!} \ t^{k(n-k)}+\dots,$$ 
	where
	$$
	\varepsilon_{k,n} =(-1)^{k(n-k)+\tau(k)+\tau(n-k)}, 
	$$
	and
	\begin{equation}\label{deggrass}
	c_{0} = (k(n-k))!\prod_{i=1}^{k}\dfrac{(i-1)!}{(n-k+i-1)!}.
	\end{equation}
\end{lemma}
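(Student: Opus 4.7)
The plan is to extract the leading-order behaviour of $\det\mathcal{M}(t)$ at $t=0$ by repeated differentiation. At $t=0$ the $i$th row of $\mathcal{M}$ coincides with the $(k+i)$th row for each $i=1,\ldots,k$, so $\det\mathcal{M}(0)=0$. Differentiating an exponential row once multiplies each entry by $-m_j$, so the Leibniz product rule for the $p$th derivative of a determinant yields
\begin{equation*}
\left.\frac{d^p}{dt^p}\det\mathcal{M}(t)\right|_{t=0}=(-1)^p\sum_{\substack{d_1,\ldots,d_k\geq 0\\ d_1+\cdots+d_k=p}}\binom{p}{d_1,\ldots,d_k}\det M_{\mathbf{d}},
\end{equation*}
where $M_{\mathbf{d}}$ is the matrix with rows $(m_j^{i-1+d_i})_{j=1}^{n}$ for $i\leq k$ and $(m_j^{i-k-1})_{j=1}^{n}$ for $i>k$. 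For $\det M_{\mathbf{d}}$ to be nonzero, the powers $i-1+d_i$ must be pairwise distinct and disjoint from $\{0,\ldots,n-k-1\}$. A short combinatorial argument shows the smallest $p$ admitting such an assignment is $p=k(n-k)$, attained precisely when $\{i-1+d_i\}_{i=1}^{k}=\{n-k,n-k+1,\ldots,n-1\}$; these leading-order assignments are parametrised by bijections $\pi\colon\{1,\ldots,k\}\to\{n-k,\ldots,n-1\}$ with $d_i=\pi(i)-i+1$, or equivalently by permutations $\rho\in S_k$ via $\pi(i)=n-\rho(i)$.

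For each such $\rho$, the matrix $M_{\mathbf{d}}$ differs from the $n\times n$ Vandermonde matrix $(m_j^{n-i})_{i,j=1}^{n}$ by a row permutation: the top $k$ rows realise the permutation $\rho$ of $\{1,\ldots,k\}$, and the bottom $n-k$ rows appear in reversed order. Hence $\det M_{\mathbf{d}}=\mathrm{sgn}(\rho)\,(-1)^{(n-k)(n-k-1)/2}V$. Assembling the multinomial coefficients, the global sign $(-1)^p$, and dividing by $(k(n-k))!$ shows that the coefficient of $t^{k(n-k)}$ in $\det\mathcal{M}(t)$ equals $V\,(-1)^{k(n-k)+(n-k)(n-k-1)/2}\det A$, where $A$ is the $k\times k$ matrix with $A_{ij}=1/(n-i-j+1)!$.

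To evaluate $\det A$, scale row $i$ by $(n-i)!$ and column $j$ by $1/(j-1)!$, converting $A$ into the binomial matrix $\bigl(\binom{n-i}{j-1}\bigr)_{i,j=1}^{k}$. Subtracting row $i+1$ from row $i$ for $i=1,\ldots,k-1$ and using Pascal's identity replaces those rows by $(\binom{n-i-1}{j-2})_{j}$, whose first entry vanishes; cofactor expansion along the first column gives the recursion $D_k(n)=(-1)^{k+1}D_{k-1}(n-1)$ with base case $D_1(n)=1$, so $D_k(n)=(-1)^{k(k-1)/2}$ independent of $n$. Undoing the scalings and using $\prod_{i=1}^{k}(n-i)!=\prod_{i=1}^{k}(n-k+i-1)!$ yields $\det A=(-1)^{k(k-1)/2}\prod_{i=1}^{k}(i-1)!/(n-k+i-1)!$. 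Matching the accumulated sign $(-1)^{k(n-k)+(n-k)(n-k-1)/2+k(k-1)/2}$ with $\varepsilon_{k,n}$ uses the elementary congruence $m(m-1)/2\equiv\sigma(m)\pmod{2}$, verified by inspecting $m\bmod 4$.

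The main technical obstacle is the careful tracking of the four separate sign contributions: the $(-1)^{\sum d_i}$ from the derivatives, the row-reordering sign in $\det M_{\mathbf{d}}$, the Pascal-recursion sign in $\det A$, and the final reconciliation with the piecewise definition of $\varepsilon_{k,n}$. The combinatorial identification of the minimal vanishing order and the Pascal-based reduction are both short once the setup is fixed.
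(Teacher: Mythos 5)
Your argument is correct, and it follows the paper's skeleton up to the crucial evaluation step: both proofs use the Leibniz/multinomial rule for $\frac{d^{p}}{dt^{p}}\det\mathcal{M}(t)$, the power-counting argument forcing the minimal order $p=k(n-k)$ with the top-row exponents filling $\{n-k,\dots,n-1\}$, and the parametrisation of the surviving terms by permutations in $\mathrm{Sym}_{k}$. Where you diverge is in evaluating the resulting alternating sum $\sum_{\sigma}\mathrm{sgn}(\sigma)\,p!/\prod_{i}(l_{i}-\sigma(i)+1)!$: the paper recognises this as the determinantal formula for the number of standard Young tableaux of the $k\times(n-k)$ rectangle and then invokes the Hook Length Formula, citing Fulton--Harris for both steps, whereas you evaluate the equivalent determinant $\det\bigl(1/(n-i-j+1)!\bigr)$ by hand, converting it to the binomial determinant $\det\bigl(\binom{n-i}{j-1}\bigr)$ and collapsing it with Pascal's identity. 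Your route is self-contained and avoids the representation-theoretic input; in effect you re-derive the hook-length evaluation for rectangular shapes. You are also more explicit than the paper about the sign $\varepsilon_{k,n}$, which the paper essentially asserts: you account separately for the $(-1)^{p}$ from the derivatives, the $\mathrm{sgn}(\rho)$ and the reversal sign $(-1)^{(n-k)(n-k-1)/2}$ from matching $M_{\mathbf{d}}$ to the Vandermonde, and the $(-1)^{k(k-1)/2}$ from the Pascal recursion, and then check the congruence $m(m-1)/2\equiv\sigma(m)\pmod 2$ to reconcile with the paper's piecewise definition. I verified your final expression against the lemma in the cases $(k,n)=(1,2),(1,3),(2,4)$ and it agrees; the only point worth spelling out in a final write-up is the one-line inequality $\sum_{i}d_{i}=\sum_{i}(i-1+d_{i})-\sum_{i}(i-1)\geq\sum_{j=n-k}^{n-1}j-\sum_{i=0}^{k-1}i=k(n-k)$ behind your ``short combinatorial argument,'' which is exactly the computation the paper displays.
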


\begin{proof}
	In order to get a non-zero determinant when $t=0$, we need the first $k$ rows to yield the powers ${m_{j}^{n-k}, m_{j}^{n-k+1},\dots,m_{j}^{n-1}}.$ This means putting at least $(n-k)$ derivatives onto each of the first $k$ rows and thus the first possible non-zero derivative is the $(k(n-k))^{th}$ one. We proceed by computing
$$
\dfrac{d^{k(n-k)}}{dt^{k(n-k)}}\det(\mathcal{M}(t))\bigg|_{t=0}. 	
$$	
For $1\leq i \leq k$ let $l_{i}=n-i$ (we can think of the $l_{i}$ as the remaining powers of the $m_{j}$ we require in order that the resulting matrix is non-singular). For each $\sigma \in \textrm{Sym}_{k}$, let the number of derivatives of the $\sigma(i)^{th}$ row be given by
$$(l_{i}-\sigma(i)+1).$$ 
(Hence the power of $m$ contributed by the $\sigma(i)^{th}$ row is $l_{i}$). 
Using the multinomial version of the Leibniz rule and weighting the resulting matrix by the sign of  the permutation $\sigma$ (as well as reordering the rows so the powers of $m_{j}$ run from $1$ to $n-1$) we obtain the $(k(n-k))^{th}$ derivative of $\det(\mathcal{M}(t))$ at $t=0$ is given by
	$$\varepsilon_{k,n}V\left(\sum_{\sigma \in \textrm{Sym}_{k}} \textrm{sgn}(\sigma)\dfrac{[k(n-k)]!}{(l_{1}-\sigma(k)+1)!(l_{2}-\sigma(k-1)+1)!\dots(l_{k}-\sigma(1)+1)!}\right).$$
	The result now follows from the discussion in \cite{FH} where the quantity inside the brackets is shown to compute the number of standard Young tableau of row structure 
	$${(n-k,n-k,\dots,n-k)\in \mathbb{N}^{k}}.$$
	The formula for $c_{0}$ can be computed by the famous Hook Length formula (see Section 4.1 in \cite{FH}).  This gives the result.
\end{proof}
The formula for $c_{0}$ (up to factors of $\pi$) recovers the volume of $Gr_{k}(\mathbb{C}^{n})$ as first computed by Schubert \cite{GrHa}. We will see that $c_{0}$ is essentially the first term in the expansion of the Duistermaat--Heckman integral (\ref{DHGroup}) which indeed should be the volume of the manifold computed with respect to the symplectic form. \\
\\
In order to compute the stability integral (\ref{KKstab}), we will require the coefficient of $t^{k(n-k)+3}$ in the power series expansion about $0$ of $\det(\mathcal{M}(t))$.  After factoring out $V$, this coefficient will be a combination of the Schur polynomials $S_{(3,0,\dots,0)}$, $S_{(2,1,0\dots,0)}$, and $S_{(1,1,1,0\dots,0)}$. The coefficient of each polynomial can be computed in terms constant $c_{0}$ given by Equation (\ref{deggrass}).
\begin{lemma}\label{AL2}
	Let $\dfrac{c_{3}}{(k(n-k)+3)!}$ be the coefficient of $t^{k(n-k)+3}$ in the power series expansion about $0$ of $\det(\mathcal{M}(t))$ and let $\varepsilon_{k,n}$ be as in Lemma \ref{AL1}. Then
	$$c_{3} = \varepsilon_{k,n}V(c_{(3,0,0)}S_{(3,0,\dots,0)}+c_{(2,1,0)}S_{(2,1,0\dots,0)}+c_{(1,1,1)}S_{(1,1,1,0\dots,0)}),$$
	where		
	\begin{eqnarray}
	c_{(3,0,0)} & = \left(\prod_{i=1}^{3}(k(n-k)+i) \right)\dfrac{k(k+1)(k+2)}{6n(n+1)(n+2)}c_{0},\\
	c_{(2,1,0)} & = \left(\prod_{i=1}^{3}(k(n-k)+i) \right)\dfrac{(k-1)k(k+1)}{3(n-1)n(n+1)}c_{0},\\
	c_{(1,1,1)} & = \left(\prod_{i=1}^{3}(k(n-k)+i) \right)\dfrac{(k-2)(k-1)k}{6(n-2)(n-1)n}c_{0}. 
	\end{eqnarray}
	
\end{lemma}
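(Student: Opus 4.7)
The plan is to extend the argument of Lemma \ref{AL1}: first read off the coefficient of $t^{k(n-k)+3}$ in $\det(\mathcal{M}(t))$ via the multivariate Leibniz rule, then identify the resulting signed multinomial sums with counts of standard Young tableaux (SYT) of augmented rectangular shapes, and finally apply the hook length formula to obtain the stated closed forms.

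The starting point is the Taylor expansion
\begin{equation*}
\det(\mathcal{M}(t)) = \sum_{d_{1},\dots, d_{k}\geq 0} \frac{t^{d_{1}+\cdots +d_{k}}}{d_{1}!\cdots d_{k}!}\det(N_{d_{1},\dots,d_{k}}),
\end{equation*}
where $N_{d_{1},\dots,d_{k}}$ is obtained from $\mathcal{M}(0)$ by replacing its $i$-th row (for $i\leq k$) by $(-1)^{d_{i}}m_{j}^{i-1+d_{i}}$. For $\sum d_{i}=k(n-k)+3$, $\det(N_{d_{1},\dots,d_{k}})$ is nonzero precisely when the multiset $\{i-1+d_{i}\}_{i=1}^{k}\cup\{0,1,\dots,n-k-1\}$ consists of $n$ distinct values, which forces $\{i-1+d_{i}\}=\{\lambda_{j}+n-j : 1\leq j\leq k\}$ for a unique partition $\lambda$ with $|\lambda|=3$ and at most $k$ parts. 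The three possibilities are $\lambda=(3)$, $(2,1)$, and $(1,1,1)$, matching the three Schur polynomials appearing in the statement.

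For each such $\lambda$ the contributing tuples are parametrised by permutations $\pi\in\textrm{Sym}_{k}$ through $d_{i}=\lambda_{\pi(i)}+n-\pi(i)-(i-1)$, and after reordering rows into strictly decreasing exponents one obtains $\det(N_{d_{1},\dots,d_{k}})=\pm\,\textrm{sgn}(\pi)\,\varepsilon_{k,n}\,V\,S_{\lambda}$, where the overall sign combines $(-1)^{k(n-k)+3}$ from the exponential derivatives with the block-reordering sign, in direct analogy with Lemma \ref{AL1}. Collecting terms reduces the coefficient of $S_{\lambda}$ in $c_{3}$ to the signed multinomial sum
\begin{equation*}
\sum_{\pi\in\textrm{Sym}_{k}}\textrm{sgn}(\pi)\frac{(k(n-k)+3)!}{\prod_{i=1}^{k}(\lambda_{\pi(i)}+n-\pi(i)-i+1)!},
\end{equation*}
which is the Frobenius determinantal identity for the number of SYT of shape $\nu^{\lambda}:=(n-k+\lambda_{1},\dots,n-k+\lambda_{k})$; this is the direct generalisation of the rectangular count used in Lemma \ref{AL1} via the discussion in \cite{FH}.

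The final step is to apply the hook length formula to $\nu^{\lambda}$ and compare with the rectangular case of Lemma \ref{AL1}, giving
\begin{equation*}
c_{\lambda} = |\textrm{SYT}(\nu^{\lambda})| = \frac{(k(n-k)+3)!}{(k(n-k))!}\cdot c_{0}\cdot \frac{\prod_{(i,j)}h_{k\times(n-k)}(i,j)}{\prod_{(i,j)\in \nu^{\lambda}}h_{\nu^{\lambda}}(i,j)}.
\end{equation*}
Since $\nu^{\lambda}$ differs from the $k\times(n-k)$ rectangle only in the rows affected by the three added boxes, the hook-product ratio splits as a telescoping product over those rows times the product of hooks at the three new boxes. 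The main obstacle is the bookkeeping required here: one must verify that the telescoping collapses to exactly $k(k+1)(k+2)/[n(n+1)(n+2)]$ (for $\lambda=(3)$), $(k-1)k(k+1)/[(n-1)n(n+1)]$ (for $\lambda=(2,1)$), and $(k-2)(k-1)k/[(n-2)(n-1)n]$ (for $\lambda=(1,1,1)$), and that the new-box hook products $6$, $3$, $6$ supply the prefactors $\tfrac{1}{6}$, $\tfrac{1}{3}$, $\tfrac{1}{6}$ in the three stated formulas.
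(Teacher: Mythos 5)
Your proposal is correct and follows essentially the same route as the paper: expand $\det(\mathcal{M}(t))$ by the multivariate Leibniz rule, observe that the nonvanishing contributions at order $k(n-k)+3$ correspond exactly to the partitions $(3)$, $(2,1)$, $(1,1,1)$ added to the $k\times(n-k)$ rectangle, identify the signed multinomial sums with the Frobenius determinantal count of standard Young tableaux of the augmented shapes, and finish with the hook length formula. The only difference is that you spell out the hook-product telescoping (the new-box hooks $6$, $3$, $6$ and the row-ratio factors), which the paper leaves implicit in its appeal to the hook length formula; your bookkeeping checks out against the stated constants.
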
	
\begin{proof}
	Let ${\chi_{1} = (3,0,\dots,0)},$ ${\chi_{2} = (2,1,0,\dots,0)},$ and ${\chi_{3} = (1,1,1,0,\dots,0) \in \mathbb{Z}^{n}}$. For ${i=1,2}$ or $3$, let 
	$${\upsilon = (\underbrace{n-k,n-k,\dots,n-k}_{k \ \mathrm{terms}},0,\dots,0)+\chi_{i}},$$ and denote by $\upsilon_{j}$ the $j^{th}$ entry of $\upsilon$.
	Furthermore, for ${j=1,2,\dots,k}$ let
	$$l_{j} = (\upsilon_{j}+k-j).$$
As in the proof of Lemma \ref{AL1}, the $l_{i}$ are the remaining powers of $m$s we need to obtain to get a non-singular matrix. Let row $\sigma(j)$ have $(l_{j}-\sigma(j)+1)$ derivatives applied to it so, as in the proof of Lemma \ref{AL1}, the power of the $m$s in row $\sigma(j)$ is $l_{j}$. For a fixed element $\sigma \in \textrm{Sym}_{k}$, computing the derivative of $\det(\mathcal{M}(t))$ at $0$ and after dividing through by $V$, the distribution of the powers shows we get the Schur polynomial $S_{\chi_{i}}$.\\ 
	\\
	Using the multinomial version of the Leibniz rule where we sum only over the distribution of derivatives that yield powers of $m$ running  ${1,2,\dots,n+2}$ and weighting the resulting matrix by the sign of the permutation $\sigma$ we obtain that this part of the $(k(n-k)+3)^{rd}$ derivative of $\det(\mathcal{M}(t))/V$ at $t=0$ (and hence the coefficient of $S_{\chi_{i}}$) is given by
	$$\varepsilon_{k,n}V\left(\sum_{\sigma \in \textrm{Sym}_{k}} \textrm{sgn}(\sigma)\dfrac{[k(n-k)+3]!}{(l_{1}-\sigma(k)+1)!(l_{2}-\sigma(k-1)+1)!\dots(l_{k}-\sigma(1)+1)!}\right).$$
	Again, the discussion in \cite{FH} shows that the quantity inside the brackets computes the number of standard Young tableau of row structure $$\left((n-k,n-k,\dots,n-k)+\tilde{\chi}_{i}\right) \in \mathbb{N}^{k},$$ 
where $\tilde{\chi_{i}}$ is the vector formed from the first $k$ entries of $\chi_{i}$.	
The result follows from the Hook Length formula.
\end{proof}

\subsection{Restriction to $\sum_{j=1}^{n} m_{j}=0$}

The results of the previous section are valid for a general set of inputs $m\in \mathbb{R}^{n}$.  In Section \ref{sec:5} we will want to view the vector $m$ as an element of the Lie algebra $\mathfrak{t}\subset \mathfrak{su}(n)$ corresponding to the maximal torus defined by taking the diagonal matrices.  As the matrices in question are trace free, we wish to consider restricting the inputs $m$ to the subspace of $\mathbb{R}^{n}$ defined by $\sum_{j=1}^{n} m_{j}=0$. We will denote this subspace by $\mathcal{T}_{n}$.\\
\\
As remarked previously, in general, the Schur polynomials of a fixed degree are linearly independent; however, when restricted to the subspace  $\mathcal{T}_{n}$, this is no longer the case.  Fortunately, in the degree three case, the Chevalley--Shephard--Todd theorem shows that the restriction is one dimensional and we can compute each restriction in terms of a fixed $\mathrm{Sym}_{n}$-invariant cubic polynomial which for simplicity we choose to be 
$$\sum_{j=1}^{n}m_{j}^{3}.$$

\begin{lemma}\label{ResLem1}
The restriction of the Schur polynomials $S_{(3,0,0,\dots,0)}$, $S_{(2,1,0,\dots,0)}$, and  
$S_{(1,1,1,0\dots,0)} $ to the subspace of $\mathcal{T}_{n}\subset \mathbb{R}^{n}$ yields:

\begin{align}
S_{(3,0,0,\dots,0)} = \dfrac{1}{3}\left(\sum_{j=1}^{n}m_{j}^{3}\right),\label{RL1:eq1}\\
S_{(2,1,0,\dots,0)} = \dfrac{-1}{3}\left(\sum_{j=1}^{n}m_{j}^{3}\right),\label{RL1:eq2}\\
S_{(1,1,1,0\dots,0)} = \dfrac{1}{3}\left(\sum_{j=1}^{n}m_{j}^{3}\right)\label{RL1:eq3}.
\end{align}

\end{lemma}
\begin{proof}
The subspace should be identified with the Lie algebra $\mathfrak{t}$ of the maximal torus of $SU(n)$ defined by the diagonal matrices (here we ignore the factor of $\sqrt{-1}$).  Hence the restriction of the Schur polynomials yield degree three elements of  $\mathbb{R}[\mathfrak{t}]^{W}$.  As remarked previously in Section \ref{sec:3}, there is a unique generator in degree three and so any such polynomial is a multiple of some fixed non-zero element.  We choose  $\sum_{j=1}^{n}m_{j}^{3}$ as this generating element.\\
\\
To compute the multiples we simply need to evaluate each polynomial on an element of $\mathfrak{t}$ which is not a zero of $\sum_{j=1}^{n}m_{j}^{3}$. We choose the element 
$$m=(1,1,-2,0,\dots,0).$$
In this case, we note
$$
\sum_{j=1}^{n}m_{j} = -6, \qquad \sum_{1\leq i\neq j\leq n}m_{i}m_{j}^{2} =6 \qquad \sum_{1\leq i<j<l\leq n}m_{i}m_{j}m_{l} = -2.
$$
Thus 
$$S_{(3,0,0,\dots,0)}(m) = -2  \qquad S_{(2,1,0,\dots,0)}(m) = 2 \quad S_{(1,1,1,0\dots,0)}(m) = -2,
$$
and the result follows.
\end{proof}

We will also require the restricted version of Lemma \ref{AL2}.

\begin{lemma}\label{ResLem2}
Let $\dfrac{c_{3}}{(k(n-k)+3)!}$ be the coefficient of $t^{k(n-k)+3}$ in the power series expansion about $0$ of $\det(\mathcal{M}(t))$ and let $\varepsilon_{k,n}$ be as in Lemma \ref{AL1}. Then restricting the inputs $m$ to the subspace $\mathcal{T}_{n}$ yields
$$c_{3} = \varepsilon_{k,n} V \left(\prod_{i=1}^{3}(k(n-k)+i)\right)\dfrac{k(n-k)(n-2k)c_{0}}{3(n-2)(n-1)n(n+1)(n+2)}\left(\sum_{j=1}^{n}m_{j}^{3}\right).$$

\end{lemma} 
\begin{proof}
We substitute Equations (\ref{RL1:eq1}),(\ref{RL1:eq2}), and (\ref{RL1:eq3}) into the expression for $c_{3}$ in Lemma \ref{AL2}.  Simplifying 
$$c_{(3,0,0)}-c_{(2,1,0)}+c_{(1,1,1)}=$$
$$ \left(\prod_{i=1}^{3}(k(n-k)+i)\right)\dfrac{kc_{0}}{6n}\left( \dfrac{(k+1)(k+2)}{(n+1)(n+2)}-\dfrac{2(k-1)(k+1)}{(n-1)(n+1)}+\dfrac{(k-2)(k-1)}{(n-2)(n-1)}\right) = $$
$$
\left(\prod_{i=1}^{3}(k(n-k)+i)\right)\dfrac{k(n-k)(n-2k)c_{0}}{(n-2)(n-1)n(n+1)(n+2)}
$$

yields the result.

\end{proof}

\section{Proof of the Theorem \ref{ThmGr}} \label{sec:5}
The Lie algebra of $SU(n)$, $\mathfrak{su}(n)$, is identified with trace-free $n\times n$  skew-Hermitian matrices. The rank of $SU(n)$ is $n-1$ with a maximal torus $T$ being given by diagonal matrices. Hence the Lie algebra of $T$, $\mathfrak{t}$, can be identified with
$$ \textrm{Diag}(\sqrt{-1}\mu_{1},\sqrt{-1}\mu_{2},\dots,\sqrt{-1}\mu_{n}) \textrm{ with } \sum_{i}\mu_{i}=0.$$ 
The roots can identified with $e_{j}-e_{l}$ for $j\neq l$ where $e_{j}$ is the diagonal matrix with the entry $\sqrt{-1}$ in the $j^{th}$ coefficient and we are using the inner product $\langle X,Y\rangle = \tr(X^{\ast}Y)$ to identify $\mathfrak{su}(n)$ and $\mathfrak{su}^{\ast}(n)$. The Weyl group $W\cong \textrm{Sym}_{n}$ acts on $T$ by permuting the elements of the diagonal and $W$ acts on $\mathfrak{t}$ permuting the $\sqrt{-1}\mu_{i}$.\\
\\
The adjoint orbits we consider can be represented by an element $\xi \in \mathfrak{t}$. We let
 $$\xi = \textrm{Diag}(\underbrace{\sqrt{-1}\mu_{1},\dots,\sqrt{-1}\mu_{1}}_{k\text{ entries}},\underbrace{\sqrt{-1}\mu_{2}, \sqrt{-1}\mu_{2},\dots,\sqrt{-1}\mu_{2}}_{n-k\text{ entries}})$$
where $\mu_{1}>0$ and ${k\mu_{1}+(n-k)\mu_{2}=0}$. In fact, it will be useful to fix 
$$\mu_{1} = \frac{n-k}{n} \textrm{ and } \mu_{2} = -\frac{k}{n},$$
so that $\mu_{1}-\mu_{2}=1.$ We get an identification of $\mathcal{O}_{\xi}$ with $Gr_{k}(\mathbb{C}^{n})$ by considering the $k$-plane generated by the span of the $\sqrt{-1}\mu_{1}$ eigenspace at each point in the orbit.\\ 
\\
The vector $D\in \mathfrak{t}$ given by  $D=\textrm{Diag}(2\pi \sqrt{-1}m_{1}, 2\pi \sqrt{-1}m_{2},\dots,2\pi \sqrt{-1}m_{n})$, where  ${m_{j}\in \mathbb{Z}}$, $m_{j}\neq m_{l}$ for $j\neq l$, and $\sum_{j}m_{j}=0$, generates a circle action on $\mathcal{O}_{\xi}$ and is also regular.  Recall the function $f_{D}:\mathcal{O}_{\xi}\rightarrow\mathbb{R}$ defined by Equation (\ref{ef_def}). By Lemma \ref{eflem}, $f_{D}$ is an eigenfunction of the Laplacian and  by Lemma \ref{Bottlem}, the fixed points of the circle action (or equivalently the critical points of $f_{D}$) are the orbit of $\xi$ under the Weyl group $\textrm{Sym}_{n}$ which are the vectors consisting of to the $^{n}C_{k}$ possible placements of the $\mu_{1}$s. If we index a fixed point of the circle action by the $k$-element set ${J\subset \{1,2,\dots,n\}}$ corresponding to this placement and denoting such a fixed point $q_{J}$, then the value of the function $f_{D}$ at this point is
$$f_{D}(q_{J}) = 2\pi\left(\mu_{1}\sum_{j\in J}m_{j}+\mu_{2}\sum_{j\in J^{c}}m_{j}\right).$$
The set of roots $\alpha \in \Lambda_{R}$ such that $\langle \alpha,q_{J}\rangle>0$ are $e_{j_{1}}-e_{j_{2}}$ where $j_{1}\in J$ and $j_{2}\in J^{c}$.  Hence the weight of the induced action on the holomorphic tangent space at $q_{J}$ is
$$\varpi(q_{J}) = \prod_{j\in J, \ l\in J^{c}}(m_{j}-m_{l}).$$
The Duistermaat--Heckman formula yields
$$\int_{\mathcal{O}_{\xi}}e^{-tf_{D}}\omega^{n(n-k)} =  \frac{[k(n-k)]!}{t^{n(n-k)}}\sum_{J\subset \{1,2,\dots,n\} \ : \ |J|=k}\frac{e^{-\left(\mu_{1}\sum_{j\in J}m_{j}+\mu_{2}\sum_{j\in J^{c}}m_{j}\right)2\pi t}}{\prod_{j\in J, \ l\in J^{c}}(m_{j}-m_{l})}.$$
We manipulate this expression by pulling out the Vandermonde factor 
$$V=\prod_{1\leq j<l \leq n}(m_{j}-m_{l}),$$ so the sum can be written as
$$\frac{[k(n-k)]!}{Vt^{k(n-k)}}\sum_{J\subset \{1,2,\dots,n\}: \ |J|=k}\varepsilon_{J}e^{-\left(\mu_{1}\sum_{j\in J}m_{j}+\mu_{2}\sum_{j\in J^{c}}m_{j}\right)2\pi t}\prod_{j<l\in J}(m_{j}-m_{l})\prod_{j<l\in J^{c}}(m_{j}-m_{l}),$$
where $\varepsilon_{J}$ is the sign of the permutation sending $1,\dots,k$ to the sequence ${j_{1}<j_{2}<\dots<j_{k} \in J}$ and $k+1,k+2,\dots n$ to the elements of $J^{c}$.\\
\\
The sum is the (Laplace) expansion of the determinant of the following matrix
$$ M(t) = \left(
\begin{array}{cccc}
e^{-\mu_{1}m_{1}t} & e^{-\mu_{1}m_{2}t} & \dots  & e^{-\mu_{1}m_{n}t} \\ 
e^{-\mu_{1}m_{1}t}m_{1} & e^{-\mu_{1}m_{2}t}m_{2} & \dots  & e^{-\mu_{1}m_{n}t}m_{n}\\
\vdots &\vdots &\vdots &\vdots\\
e^{-\mu_{1}m_{1}t}m_{1}^{k-1} & e^{-\mu_{1}m_{2}t}m_{2}^{k-1} & \dots  & e^{-\mu_{1}m_{n}t}m_{n}^{k-1}\\ 
e^{-\mu_{2}m_{1}t} & e^{-\mu_{2}m_{2}t} & \dots  & e^{-\mu_{2}m_{n}t} \\ 
e^{-\mu_{2}m_{1}t}m_{1} & e^{-\mu_{2}m_{2}t}m_{2} & \dots  & e^{-\mu_{2}m_{n}t}m_{n} \\ 
\vdots &\vdots &\vdots &\vdots\\
e^{-\mu_{2}m_{1}t}m_{1}^{n-k-1} & e^{-\mu_{2}m_{2}t}m_{2} ^{n-k-1}& \dots  & e^{-\mu_{2}m_{n}t}m_{n}^{n-k-1} 
\end{array}
\right).$$
Hence, we have  
\begin{small}
\begin{equation}\label{DHeqDetM}
\int_{\mathcal{O}_{\xi}}e^{-tf_{D}}\omega^{k(n-k)} =\frac{[k(n-k)]!}{Vt^{k(n-k)}}\det(M(2\pi t)) = \varepsilon_{k,n}\frac{[k(n-k)]!}{Vt^{k(n-k)}}\det(\mathcal{M}(2\pi t)),
\end{equation}
\end{small}
where $\mathcal{M}(t)$ is the matrix (\ref{DHMatrix}) and $\varepsilon_{k,n}$ is the signed quantity from Lemma \ref{AL1}. To obtain the final equality we multiply the matrix by $e^{\mu_{2}\left(\sum_{j=1}^{n}m_{j}\right)t}$ then distribute this over each column. The equality follows as  $\sum_{j=1}^{n}m_{j}=0$ and $\mu_{1}-\mu_{2}=1$. \\
\\
We remark again, in order to be able to apply Kr\"oncke's test we need $f_{D}$ to be a genuine eigenfunction of the Laplacian. This means we must normalise $f_{D}$ so that it has mean value zero or, equivalently, so that the first derivative of 
$$\int_{\mathcal{O}_{\xi}}e^{-tf_{D}}\omega^{k(n-k)}$$
vanishes when $t=0$. The $t^{k(n-k)+1}$ coefficient of the power series expansion of $\mathcal{M}(t)$ is a multiple of $S_{(1,0,\dots,0)} = \sum_{j}m_{j}$ which vanishes and hence $f_{D}$ is an eigenfunction. As mentioned after the proof of Theorem \ref{ThmCST} in Section \ref{sec:3}, the function $f_{D}$ does not really need normalising as there are no homogenous, degree 1, $\mathrm{Sym}_{n}$-invariant polynomials by considering the $A_{n}$ part of the Chevalley--Shephard--Todd theorem. This was also directly shown in Lemma \ref{f_mv_lem}. \\
\\
The proofs of the main theorems are now straightforward. Using Equation (\ref{DHeqDetM}) we see that if we can find a regular $D\in \mathfrak{t}$, that generates a circle action, such that the third derivative of the quantity 
$$\varepsilon_{k,n}\frac{[k(n-k)]!}{Vt^{k(n-k)}}\det(\mathcal{M}(2\pi t)),$$
does not vanish when evaluated at $0$, then
$$\int_{\mathcal{O}_{\xi}} f_{D}^{3} \ \omega^{k(n-k)}\neq 0.$$
Thus we will be able to apply Kr\"oncke's stability criterion in Thereom \ref{KKstabthm} to obtain the result.\\
\\
It is clear that the third derivative of
$$\varepsilon_{k,n}\frac{[k(n-k)]!}{Vt^{k(n-k)}}\det(\mathcal{M}(2\pi t)),$$
 evaluated at $0$ is a non-zero multiple of the quantity $c_{3}$ given in Lemma \ref{ResLem2}. We see that $c_{3}$ does not vanish provided $n\neq 2k$ and 
$$\sum_{j=1}^{n}m_{j}^{3}\neq 0.$$
There are many possible choices available. For example, we could choose
$$D= 2\pi\sqrt{-1}\mathrm{Diag}\left(1,2,3,...,n-1, -\frac{n(n-1)}{2}\right).$$
In which case 
$$\sum_{j=1}^{n}m_{j}^{3} = \left(\frac{n(n-1)}{2}\right)^{2}-\left(\frac{n(n-1)}{2}\right)^{3} =\left(\frac{n(n-1)}{2}\right)^{2}\left(\frac{(n+2)(1-n)}{2}\right), $$
which is clearly non-zero for all $n> 2$.
 \bibliographystyle{acm} 
 \bibliography{HMWRefs}

\begin{thebibliography}{10}

\bibitem{BV}
{\sc Berline, N., and Vergne, M.}
\newblock Fourier transforms of orbits of the coadjoint representation.
\newblock In {\em Representation theory of reductive groups ({P}ark {C}ity,
  {U}tah, 1982)}, vol.~40 of {\em Progr. Math.} Birkh\"auser Boston, Boston,
  MA, 1983, pp.~53--67.

\bibitem{Bes}
{\sc Besse, A.~L.}
\newblock {\em Einstein manifolds}.
\newblock Classics in Mathematics. Springer-Verlag, Berlin, 2008.
\newblock Reprint of the 1987 edition.

\bibitem{Bott}
{\sc Bott, R.}
\newblock The geometry and representation theory of compact {L}ie groups.
\newblock In {\em Proceedings of the {SRC}/{LMS} {R}esearch {S}ymposium held in
  {O}xford, {J}une 28--{J}uly 15, 1977\/} (1979), G.~L. Luke, Ed., vol.~34 of
  {\em London Mathematical Society Lecture Note Series}, Cambridge University
  Press, Cambridge-New York, pp.~v+341.

\bibitem{CHI}
{\sc Cao, H.-D., Hamilton, R., and Ilmanen, T.}
\newblock Gaussian densities and stability for some {R}icci solitons.
\newblock {\em -\/} (2004).
\newblock preprint, arXiv:math/0404165 [math.DG].

\bibitem{CH}
{\sc Cao, H.-D., and He, C.}
\newblock Linear stability of {P}erelman's {$\nu$}-entropy on symmetric spaces
  of compact type.
\newblock {\em J. Reine Angew. Math. 709\/} (2015), 229--246.

\bibitem{CZ}
{\sc Cao, H.-D., and Zhu, M.}
\newblock On second variation of {P}erelman's {R}icci shrinker entropy.
\newblock {\em Math. Ann. 353}, 3 (2012), 747--763.

\bibitem{Chow}
{\sc Chow, B.}
\newblock The {R}icci flow on the {$2$}-sphere.
\newblock {\em J. Differential Geom. 33}, 2 (1991), 325--334.

\bibitem{Coxe}
{\sc Coxeter, H. S.~M.}
\newblock The product of the generators of a finite group generated by
  reflections.
\newblock {\em Duke Math. J. 18\/} (1951), 765--782.

\bibitem{DH}
{\sc Duistermaat, J., and Heckman, G.}
\newblock On the variation in the cohomology of the symplectic form of the
  reduced phase space.
\newblock {\em Inventiones Mathematicae 69\/} (1982), 259--269.

\bibitem{FH}
{\sc Fulton, W., and Harris, J.}
\newblock {\em Representation theory}, vol.~129 of {\em Graduate Texts in
  Mathematics}.
\newblock Springer-Verlag, New York, 1991.
\newblock A first course, Readings in Mathematics.

\bibitem{GG}
{\sc Gasqui, J., and Goldschmidt, H.}
\newblock {\em Radon transforms and the rigidity of the {G}rassmannians},
  vol.~156 of {\em Annals of Mathematics Studies}.
\newblock Princeton University Press, Princeton, NJ, 2004.

\bibitem{GrHa}
{\sc Griffiths, P., and Harris, J.}
\newblock {\em Principles of algebraic geometry}.
\newblock Wiley-Interscience [John Wiley \&\ Sons], New York, 1978.
\newblock Pure and Applied Mathematics.

\bibitem{Hall19}
{\sc Hall, S.~J.}
\newblock The canonical {E}instein metric on ${G}_{2}$ is dynamically unstable
  under the {R}icci flow.
\newblock {\em Bull. Lond. Math. Soc. 51}, 3 (2019), 399--405.

\bibitem{HMPAMS}
{\sc Hall, S.~J., and Murphy, T.}
\newblock On the linear stability of {K}\"ahler-{R}icci solitons.
\newblock {\em Proc. Amer. Math. Soc. 139}, 9 (2011), 3327--3337.

\bibitem{HMPJM}
{\sc Hall, S.~J., and Murphy, T.}
\newblock Variation of complex structures and the stability of
  {K}\"ahler-{R}icci solitons.
\newblock {\em Pacific J. Math. 265}, 2 (2013), 441--454.

\bibitem{HMAGAG1}
{\sc Hall, S.~J., and Murphy, T.}
\newblock On the spectrum of the {P}age and the {C}hen-{L}e{B}run-{W}eber
  metrics.
\newblock {\em Ann. Global Anal. Geom. 46}, 1 (2014), 87--101.

\bibitem{Ham}
{\sc Hamilton, R.~S.}
\newblock The {R}icci flow on surfaces.
\newblock In {\em Mathematics and general relativity ({S}anta {C}ruz, {CA},
  1986)}, vol.~71 of {\em Contemp. Math.} Amer. Math. Soc., Providence, RI,
  1988, pp.~237--262.

\bibitem{HasMul}
{\sc Haslhofer, R., and M\"uller, R.}
\newblock Dynamical stability and instability of {R}icci-flat metrics.
\newblock {\em Math. Ann. 360}, 1-2 (2014), 547--553.

\bibitem{Hum}
{\sc Humphreys, J.~E.}
\newblock {\em Introduction to {L}ie algebras and representation theory},
  vol.~9 of {\em Graduate Texts in Mathematics}.
\newblock Springer-Verlag, New York-Berlin, 1978.
\newblock Second printing, revised.

\bibitem{IKS}
{\sc Isenberg, J., Knopf, D., and \v{S}e\v{s}um, N.}
\newblock Non-{K}\"{a}hler {R}icci flow singularities modeled on
  {K}\"{a}hler-{R}icci solitons.
\newblock {\em Pure Appl. Math. Q. 15}, 2 (2019), 749--784.

\bibitem{Kir}
{\sc Kirillov, A.~A.}
\newblock Merits and demerits of the orbit method.
\newblock {\em Bull. Amer. Math. Soc. (N.S.) 36}, 4 (1999), 433--488.

\bibitem{KS}
{\sc Knopf, D., and \v{S}e\v{s}um, N.}
\newblock Dynamic instability of {$\Bbb{CP}^N$} under {R}icci flow.
\newblock {\em J. Geom. Anal. 29}, 1 (2019), 902--916.

\bibitem{KKCVP}
{\sc Kr\"oncke, K.}
\newblock Stability and instability of {R}icci solitons.
\newblock {\em Calc. Var. Partial Differential Equations 53}, 1-2 (2015),
  265--287.

\bibitem{KlKr1}
{\sc Kr\"oncke, K.}
\newblock Stability of {E}instein metrics under {R}icci flow.
\newblock {\em Comm. Anal. Geom. 28}, 2 (2020), 351--394.

\bibitem{Mat}
{\sc Matsushima, Y.}
\newblock Remarks on {K}\"ahler--{E}instein manifolds.
\newblock {\em Nagoya Math. J. 46\/} (1972), 161--173.

\bibitem{Max}
{\sc M\'aximo, D.}
\newblock On the blow-up of four-dimensional {R}icci flow singularities.
\newblock {\em J. Reine Angew. Math. 692\/} (2014), 153--171.

\bibitem{MS}
{\sc McDuff, D., and Salamon, D.}
\newblock {\em Introduction to symplectic topology}, second~ed.
\newblock Oxford Mathematical Monographs. The Clarendon Press, Oxford
  University Press, New York, 1998.

\bibitem{Par}
{\sc Paradan, P.-E.}
\newblock The {F}ourier transform of semi-simple coadjoint orbits.
\newblock {\em J. Funct. Anal. 163}, 1 (1999), 152--179.

\bibitem{Per1}
{\sc Perelman, G.}
\newblock The entropy formula for the {R}icci flow and its geometric
  applications.
\newblock {\em -\/} (2002).
\newblock preprint, arXiv:math/0211159 [math.DG].

\bibitem{Ros}
{\sc Rossmann, W.}
\newblock Kirillov's character formula for reductive {L}ie groups.
\newblock {\em Invent. Math. 48}, 3 (1978), 207--220.

\bibitem{Sesum}
{\sc Sesum, N.}
\newblock Linear and dynamical stability of {R}icci-flat metrics.
\newblock {\em Duke Mathematical Journal 133}, 1 (2006), 1--26.

\bibitem{ST}
{\sc Shephard, G.~C., and Todd, J.~A.}
\newblock Finite unitary reflection groups.
\newblock {\em Canadian J. Math. 6\/} (1954), 274--304.

\bibitem{TZ}
{\sc Tian, G., and Zhu, X.}
\newblock Convergence of the {K}\"ahler-{R}icci flow on {F}ano manifolds.
\newblock {\em J. Reine Angew. Math. 678\/} (2013), 223--245.

\end{thebibliography}

\end{document}